\batchmode
\makeatletter
\def\input@path{{"/Users/russw/Documents/Research/mypapers/Partitionability is NP-complete/"}}
\makeatother
\documentclass[12pt,oneside,english]{amsart}
\usepackage[T1]{fontenc}
\usepackage[latin9]{inputenc}
\usepackage{babel}
\usepackage{array}
\usepackage{url}
\usepackage{enumitem}
\usepackage{amstext}
\usepackage{amsthm}
\usepackage{amssymb}
\usepackage{geometry}
\usepackage[pdftex,pdfusetitle,
 bookmarks=true,bookmarksnumbered=false,bookmarksopen=false,
 breaklinks=false,pdfborder={0 0 1},backref=false,colorlinks=false]
 {hyperref}

\makeatletter

\providecommand{\tabularnewline}{\\}

\numberwithin{equation}{section}
\numberwithin{figure}{section}
\theoremstyle{plain}
\newtheorem{thm}{\protect\theoremname}[section]
\theoremstyle{plain}
\newtheorem{cor}[thm]{\protect\corollaryname}
\theoremstyle{plain}
\newtheorem{lem}[thm]{\protect\lemmaname}
\theoremstyle{plain}
\newtheorem{prop}[thm]{\protect\propositionname}
\theoremstyle{remark}
\newtheorem{rem}[thm]{\protect\remarkname}
\theoremstyle{plain}
\newtheorem{question}[thm]{\protect\questionname}

\usepackage{lmodern}
\usepackage{tikz}
\usepackage{subfig}

\newcommand{\prob}[1]{\textsc{#1}}
\newcommand{\Partitionability}{\prob{Partitionability}}

\makeatother

\providecommand{\corollaryname}{Corollary}
\providecommand{\lemmaname}{Lemma}
\providecommand{\propositionname}{Proposition}
\providecommand{\questionname}{Question}
\providecommand{\remarkname}{Remark}
\providecommand{\theoremname}{Theorem}

\begin{document}
\global\long\def\cc{\mathbb{C}}%

\global\long\def\ff{\mathbb{F}}%

\global\long\def\pp{\mathbb{P}}%

\global\long\def\rr{\mathbb{R}}%

\global\long\def\rp{\mathbb{R}\mathrm{P}}%

\global\long\def\poincare{\mathrm{P}}%

\global\long\def\zz{\mathbb{Z}}%

\global\long\def\bdry{\partial}%

\global\long\def\link{\operatorname{link}}%

\global\long\def\del{\operatorname{del}}%

\global\long\def\cone{\operatorname{Cone}}%

\global\long\def\redeul{\tilde{\chi}}%

\global\long\def\Epsilon{\mathrm{E}}%

\global\long\def\Tau{\mathrm{T}}%

\global\long\def\Kappa{\mathrm{K}}%

% Shared TikZ styles for the partitionability gadget figures.
% Needs: \usepackage{tikz}
\usetikzlibrary{calc,arrows.meta}

\definecolor{ink}{HTML}{262626}
\definecolor{relred}{HTML}{B8312F}    % faces of the relative part A
\definecolor{varblue}{HTML}{2B5BAA}   % variable (output) vertices
\definecolor{facetfill}{HTML}{F4F1EA} % pale paper tint for triangles
\definecolor{burn}{HTML}{F2B33D}      % highlighter for the faces of an interval
\definecolor{arrowink}{HTML}{8F5414}  % arrows R_F -> F

% layers: facet fills at the back, then burn bands, then ink
\pgfdeclarelayer{fills}\pgfdeclarelayer{burns}
\pgfsetlayers{fills,burns,main}

\tikzset{
  every picture/.append style={line cap=round, line join=round},
  e/.style={draw=ink, line width=0.6pt},                         % ordinary edge
  rel/.style={draw=relred, line width=0.8pt},                    % edge in A
  pin/.style={draw=ink, line width=0.5pt,                        % pinned edge
    double=white, double distance=1.4pt},
  spine/.style={draw=ink, line width=1.8pt},                     % spine of a 3-page book
  free/.style={e, dash pattern=on 2.4pt off 1.8pt},              % interface edge left to a neighbour
  % vertices (white ring so edges stop just short of the dot)
  v/.style={circle, inner sep=0pt, minimum size=5.4pt, fill=ink, draw=white, line width=0.8pt},
  vA/.style={v, fill=relred},                                    % vertex in A
  vout/.style={v, fill=varblue},                                 % output vertex (variable gadget)
  voutc/.style={circle, inner sep=0pt, minimum size=4.6pt,       % output vertex seen from a clause
    fill=white, draw=varblue, line width=0.85pt},
  vused/.style={voutc, minimum size=5.4pt, fill=burn!90!facetfill},                  % output vertex taken by a clause-side interval
  lab/.style={font=\small, inner sep=1.5pt},
  glab/.style={font=\scriptsize, text=ink!55, inner sep=1pt},
  cov/.style={-{Stealth[round, length=3.6pt, width=3pt]}, line width=0.65pt, draw=arrowink},
  burnband/.style={draw=burn, line width=3pt, opacity=0.5, line cap=round, line join=round},
}

% switches: \burnfalse turns the highlighter bands off, \arrowsfalse the arrows
\newif\ifburn \burntrue
\newif\ifarrows \arrowstrue

% \taperarrow{start}{end}{r0}{r1}: the curve  start to[bend left=22] end, drawn as a dense run
% of dots whose radius goes from r0 to r1 (pt), finished with a hand-shaped head along the tangent.
% (Bezier control points of "bend left=22": angle +-22 degrees, distance 0.3915*chord.)
\makeatletter
\newcommand{\taperarrow}[4]{%
  \begingroup
  \coordinate (tp@A) at (#1); \coordinate (tp@B) at (#2);
  \pgfpointanchor{tp@A}{center}\pgfgetlastxy{\tp@ax}{\tp@ay}%
  \pgfpointanchor{tp@B}{center}\pgfgetlastxy{\tp@bx}{\tp@by}%
  \pgfmathsetmacro\tp@ax{\tp@ax}\pgfmathsetmacro\tp@ay{\tp@ay}%
  \pgfmathsetmacro\tp@bx{\tp@bx}\pgfmathsetmacro\tp@by{\tp@by}%
  \pgfmathsetmacro\tp@dx{\tp@bx-\tp@ax}\pgfmathsetmacro\tp@dy{\tp@by-\tp@ay}%
  \pgfmathsetmacro\tp@px{\tp@ax+0.3915*(0.92718*\tp@dx-0.37461*\tp@dy)}%
  \pgfmathsetmacro\tp@py{\tp@ay+0.3915*(0.37461*\tp@dx+0.92718*\tp@dy)}%
  \pgfmathsetmacro\tp@qx{\tp@bx-0.3915*(0.92718*\tp@dx+0.37461*\tp@dy)}%
  \pgfmathsetmacro\tp@qy{\tp@by-0.3915*(-0.37461*\tp@dx+0.92718*\tp@dy)}%
  \pgfmathsetmacro\tp@L{1.04*veclen(\tp@dx,\tp@dy)}%
  \pgfmathsetmacro\tp@te{max(0.3,1-2.8/\tp@L)}%
  \foreach \tp@i in {0,...,48} {%
    \pgfmathsetmacro\tp@t{\tp@i/48*\tp@te}%
    \pgfmathsetmacro\tp@s{1-\tp@t}%
    \pgfmathsetmacro\tp@x{\tp@s^3*\tp@ax+3*\tp@s^2*\tp@t*\tp@px+3*\tp@s*\tp@t^2*\tp@qx+\tp@t^3*\tp@bx}%
    \pgfmathsetmacro\tp@y{\tp@s^3*\tp@ay+3*\tp@s^2*\tp@t*\tp@py+3*\tp@s*\tp@t^2*\tp@qy+\tp@t^3*\tp@by}%
    \pgfmathsetmacro\tp@r{#3+(#4-#3)*\tp@i/48}%
    \fill[arrowink] (\tp@x pt,\tp@y pt) circle[radius=\tp@r pt];}%
  \pgfmathsetmacro\tp@ang{atan2(\tp@by-\tp@qy,\tp@bx-\tp@qx)}%
  \fill[arrowink, shift={(\tp@bx pt,\tp@by pt)}, rotate=\tp@ang]
    (0.3pt,0) -- (-3.9pt,2.1pt) .. controls (-3pt,0.6pt) .. (-2.9pt,0)
    .. controls (-3pt,-0.6pt) .. (-3.9pt,-2.1pt) -- cycle;
  \endgroup}
\makeatother

% arrow style: 0 = plain curved, 1 = tapered thin->thick, 2 = "comet" (fat at R_F, thin at head)
\def\covmode{0}
\newcommand{\covarrow}[2]{%
  \ifcase\covmode
    \draw[cov] (#1) to[bend left=22] (#2);
  \or \taperarrow{#1}{#2}{0.1}{1.0}%
  \or \taperarrow{#1}{#2}{1.25}{0.35}%
  \fi}

% \facet{a}{b}{c}: pale fill of a triangle (on the back layer)
\newcommand{\facet}[3]{%
  \begin{pgfonlayer}{fills}\fill[facetfill] (#1) -- (#2) -- (#3) -- cycle;\end{pgfonlayer}}

% \vcov{v}{a}{b}: the facet vab has restriction face {v}; its interval is {v, va, vb, vab}.
\newcommand{\vcov}[3]{%
  \begingroup
  \coordinate (vc@c) at (barycentric cs:#1=1,#2=1,#3=1);
  \coordinate (vc@v) at ($(#1)!0.17!(vc@c)$);
  \coordinate (vc@a) at ($(#2)!0.17!(vc@c)$);
  \coordinate (vc@b) at ($(#3)!0.17!(vc@c)$);
  \ifburn\begin{pgfonlayer}{burns}
    \draw[burnband] ($(vc@v)!0.86!(vc@a)$) -- (vc@v) -- ($(vc@v)!0.86!(vc@b)$);
  \end{pgfonlayer}\fi
  \ifarrows\covarrow{$(#1)!0.24!(vc@c)$}{$(#1)!0.9!(vc@c)$}\fi
  \endgroup}
% \ecov{a}{b}{c}: the facet abc has restriction face ab; its interval is {ab, abc}.
\newcommand{\ecov}[3]{%
  \begingroup
  \coordinate (vc@c) at (barycentric cs:#1=1,#2=1,#3=1);
  \coordinate (vc@m) at ($(#1)!0.5!(#2)$);
  \coordinate (vc@a) at ($(#1)!0.17!(vc@c)$);
  \coordinate (vc@b) at ($(#2)!0.17!(vc@c)$);
  \ifburn\begin{pgfonlayer}{burns}
    \draw[burnband] ($(vc@a)!0.12!(vc@b)$) -- ($(vc@a)!0.88!(vc@b)$);
  \end{pgfonlayer}\fi
  \ifarrows\covarrow{$(vc@m)!0.14!(vc@c)$}{$(vc@m)!0.95!(vc@c)$}\fi
  \endgroup}

% \copycoords{name}{x}: coordinates of a copy gadget (square wheel) with bottom-left corner at (x,0).
% Rim order 1,2,3,4 = bottom-left, top-left, top-right, bottom-right; hub h.  Rim edges 23 and 41 are pinned.
\newcommand{\copycoords}[2]{%
  \coordinate (#1-1) at (#2,0);
  \coordinate (#1-2) at ($(#2,0)+(0.38,1.65)$);
  \coordinate (#1-3) at ($(#2,0)+(1.62,1.65)$);
  \coordinate (#1-4) at ($(#2,0)+(2,0)$);
  \coordinate (#1-h) at ($(#2,0)+(1,0.78)$);}
% \copyedges{name}: draw its edges
\newcommand{\copyedges}[1]{%
  \facet{#1-h}{#1-1}{#1-2}\facet{#1-h}{#1-2}{#1-3}\facet{#1-h}{#1-3}{#1-4}\facet{#1-h}{#1-4}{#1-1}
  \foreach \i in {1,2,3,4} {\draw[e] (#1-h) -- (#1-\i);}
  \draw[e] (#1-1) -- (#1-2);
  \draw[e] (#1-3) -- (#1-4);
  \draw[pin] (#1-2) -- (#1-3);
  \draw[pin] (#1-4) -- (#1-1);}

% Coordinates and base drawing of the positive clause Cl_1 = pqr with its three converters.
% Converter i: Conv(u_i; slot) with outer vertex w_i.  pqr is shrunk a little toward the centre
% (factor 0.8) so the converter corners have room for labels.
\newcommand{\clausecoords}{%
  \coordinate (w1) at (90:2.5);  \coordinate (w2) at (210:2.5); \coordinate (w3) at (330:2.5);
  \coordinate (p) at ($(0,0)!0.8!($(w1)!0.5!(w2)$)$);
  \coordinate (q) at ($(0,0)!0.8!($(w2)!0.5!(w3)$)$);
  \coordinate (r) at ($(0,0)!0.8!($(w3)!0.5!(w1)$)$);
  \coordinate (u1) at (barycentric cs:w1=1,p=1,r=1);
  \coordinate (u2) at (barycentric cs:w2=1,p=1,q=1);
  \coordinate (u3) at (barycentric cs:w3=1,q=1,r=1);}
\newcommand{\clausebase}{%
  \facet{p}{q}{r}
  \foreach \u/\a/\b/\c in {u1/w1/p/r, u2/w2/p/q, u3/w3/q/r} {\facet{\u}{\a}{\b}\facet{\u}{\b}{\c}\facet{\u}{\c}{\a}}
  \foreach \a/\b in {w1/p,p/w2,w2/q,q/w3,w3/r,r/w1} {\draw[rel] (\a) -- (\b);}
  \foreach \a/\b in {p/q,q/r,r/p} {\draw[e] (\a) -- (\b);}
  \foreach \u/\a/\b/\c in {u1/w1/p/r, u2/w2/p/q, u3/w3/q/r} {
    \draw[e] (\u) -- (\a); \draw[e] (\u) -- (\b); \draw[e] (\u) -- (\c);}
  \foreach \x in {w1,w2,w3,p,q,r} {\node[vA] at (\x) {};}}

\title{Partitionability is NP-complete}
\author{Russ Woodroofe}
\address{Univerza na Primorskem, Glagoljaška 8, 6000 Koper, Slovenia}
\email{russ.woodroofe@famnit.upr.si}
\urladdr{\url{https://osebje.famnit.upr.si/~russ.woodroofe/}}
\subjclass[2020]{05E45, 68Q17}
\begin{abstract}
We show that the problem of deciding whether a simplicial complex
is partitionable is NP-complete in dimensions two and higher, answering
a 1995 question of Kleinschmidt and Onn.
\end{abstract}

\maketitle

\section{\label{sec:Introduction}Introduction}

A simplicial complex $\Delta$ is \emph{partitionable} if we may partition
its set of faces into parts in such a way that each part consists
of the faces between some facet $\sigma$ and a face $\varphi(\sigma)\subseteq\sigma$.
Thus, if $[\varphi(\sigma),\sigma]=\left\{ \tau:\varphi(\tau)\subseteq\tau\subseteq\sigma\right\} $,
then the required condition is that 
\[
\Delta=\dot{\bigcup_{\sigma\text{ facet}}}[\varphi(\sigma),\sigma].
\]
 We call the map $\varphi$ a \emph{partitioning scheme} of $\Delta$.
We immediately see that a partitioning scheme makes it convenient
to count the faces of $\Delta$. Indeed, if $\Delta$ is pure $d$-dimensional,
then the generating function of the faces $f_{k}$ of cardinality
$k$ satisfies 
\begin{equation}
f_{0}+f_{1}x+\cdots+f_{d+1}x^{d+1}=h_{0}(1+x)^{d+1}+h_{1}x(1+x)^{d}+\cdots+h_{d+1}x^{d+1}(1+x)^{0},\label{eq:fvechvec}
\end{equation}
where $h_{k}$ is the number of facets $\sigma$ so that $\varphi(\sigma)$
has cardinality $k$. 

Partitionability first arose in the theses of Ball and of Provan,
see \cite{Ball/Provan:1982}. It is important in the theory of shellability
\cite{Stanley:1996}; indeed, one definition of a shelling is of a
partitioning scheme with an ordering that satisfies an additional
condition. There has been some recent interest in partitionable simplicial
complexes \cite{Bergeron/Dermenjian/Machacek:2020,Doolittle/Goeckner/Lazar:2022,Duval/Goeckner/Klivans/Martin:2016,Hachimori:2021,Juhnke-Kubitzke/Venturello:2019,Santamaria-Galvis:2022}.

The decision problem \Partitionability{} asks whether a simplicial
complex $\Delta$ (given by a list of facets) is partitionable. Kleinschmidt
and Onn conjectured in \cite[Section 5]{Kleinschmidt/Onn:1995} (see
also \cite{Kleinschmidt/Onn:1996}) that \Partitionability{} is NP-complete
in dimensions two and higher, and moreover that the problem remains
NP-complete when restricted to pseudomanifolds. Noble showed that
\Partitionability{} is in NP even when dimension is not restricted
\cite{Noble:1996}. Kaibel and Pfetsch included the \Partitionability{}
problem as the final item in their 2002 list of algorithmic problems
in polytope theory \cite[Problem 35]{Kaibel/Pfetsch:2003}. 

In this paper, I will settle this problem for general simplicial complexes.
\begin{thm}
\label{thm:Main}\Partitionability{} is NP-complete, even when restricted
to pure \linebreak{}
$2$-dimensional simplicial complexes.
\end{thm}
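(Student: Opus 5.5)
Membership in NP is Noble's result \cite{Noble:1996}: a partitioning scheme $\varphi$ is a polynomial-size certificate, and one checks in polynomial time that the intervals $[\varphi(\sigma),\sigma]$ are disjoint and cover $\Delta$. So the work is NP-hardness. I plan to reduce from a variant of \prob{3-SAT}, probably \prob{Monotone 3-SAT} or a planar/positive version. This matches the gadgets suggested by the figure macros: variable gadgets, ``copy'' gadgets (square wheels with pinned edges), clause triangles $pqr$, and ``converters.''

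First, I will pass to a relative version of the problem. I work with pairs $(\Delta,A)$, where $A$ is a subcomplex whose faces need not be covered, so only $\Delta\setminus A$ must be partitioned. I will also allow ``pinned'' edges, meaning edges forced to be covered by a designated facet. The first lemma shows how to realise $A$ and pinned edges inside an honest pure $2$-complex. Roughly, I attach small partitionable \emph{absorber} complexes, such as cones or triangulated discs. These have a partitioning scheme that covers everything except a prescribed boundary and is forced to leave it uncovered, or forced to cover it. Face counting via equation~\eqref{eq:fvechvec} (the $h$-vector must be nonnegative, and the counts must match exactly) should make these forcing arguments local and easy to verify.

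Next, I build the gadgets. A variable gadget is a $2$-complex with exactly two partitioning schemes relative to its boundary. These correspond to true and false. In each scheme, a set of ``output'' vertices (or edges) is either covered inside the gadget or left for a neighbour to cover. Copy gadgets (wheels with two pinned opposite rim edges) propagate this choice along chains and let one variable feed many clauses. The clause gadget is the triangle $pqr$ with red boundary in $A$ and three converters. Its facet $pqr$ and the incident faces can be partitioned iff at least one of the three incoming signals leaves a suitable face available, that is, iff some literal is true. Negative literals are handled by a parity-flipping converter, or avoided by monotone SAT with separate all-positive and all-negative clause types. I will check each gadget by exhaustive local case analysis, which is feasible because each has few facets. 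The $f$-vector identity quickly rules out many candidate $\varphi$.

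Finally, I glue the gadgets along interface vertices and edges and prove the two directions. Given a satisfying assignment, I choose the matching scheme in each gadget and union them. Given a partitioning scheme, I show it restricts to a valid local scheme on each gadget: every facet lies in one gadget, and interface faces are owned by exactly one side. Then I read off the assignment. The main obstacle I expect is this locality claim. A face on an interface could a priori be covered by an unexpected facet in a neighbouring gadget, creating ``cheating'' schemes. The interfaces must be designed so that each shared face has exactly two possible owners, and counting arguments must force the intended dichotomy. I would attack this by first proving a global counting lemma: the number of faces equals the total size of intervals, which pins down every $|\varphi(\sigma)|$. Only then would I do the local analyses. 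The output must be pure $2$-dimensional and of polynomial size, which the construction clearly gives.
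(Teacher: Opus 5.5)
\textbf{There is a genuine gap.} The outline has the right shape: Noble for membership, a relative version first, and gadgets glued along small interfaces. But it lacks the one idea that makes the local case analyses true, and the step where you hope to supply it is wrong.

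\textbf{What counting actually gives you.} Face counting via \eqref{eq:fvechvec} does not pin down each $|\varphi(\sigma)|$. It pins down only the $h$-vector, i.e.\ how many facets have a restriction of each size. What matters is the top entry. By Lemma~\ref{lem:PartEulerChar}, the number of facets with $\varphi(\sigma)=\sigma$ equals $\chi(\Delta,\Sigma)$. The same computation shows that, in a partitioning with gluing of one gadget, it equals the Euler characteristic of the set of faces that gadget covers. Self-restricted facets are exactly the source of the ``cheating'' schemes you worry about: such a facet releases all its edges to its neighbours. Each of Lemmas~\ref{lem:PwGbdryS2}, \ref{lem:PwG-V0}, \ref{lem:PwGtau}, \ref{lem:PwGposclause} and \ref{lem:PwGnegclause} has exactly this as its escape clause~(a). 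The paper's central move is to engineer $\chi(\Delta,\Sigma)=0$ (Section~\ref{subsec:RelEulerChar}), so that no facet anywhere is self-restricted. Your plan has no such mechanism. Without it, the dichotomies you want for your variable and clause gadgets are unavailable: in general each gadget has further schemes containing self-restricted facets.

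\textbf{Why your OR clause and absorbers do not fit.} Once no facet is self-restricted, each gadget must cover faces of total Euler characteristic $0$. So the number $s$ of interface vertices a gadget uses is fixed by its own Euler characteristic: $1-3+s=0$ forces $s=2$ for $C^{+}$, and $2-3+s=0$ forces $s=1$ for $C^{\neg}$. An OR clause, whose interface usage should vary with the number of true literals, conflicts with this.
\begin{itemize}
\item Your triangle $pqr$ has relative vertices, so it covers exactly one of its edges. If each true literal leaves its slot edge to $pqr$, two true literals already fail.
\item A converter that may optionally cover its slot changes its covered Euler characteristic by one, and so reintroduces a self-restricted facet.
\end{itemize}
This is why the paper reduces from \prob{1-In-3-Sat} and pairs each $C^{+}$ with a $C^{\neg}$. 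The total becomes $3m-3m=0$. Also, each literal occurrence consumes one positive and one negated variable vertex, so $V_x$ carries $m_x$ of each and both truth values of $x$ are Euler-neutral.

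The same bookkeeping rules out your suggested absorbers. A cone or disc attached at a vertex would cover faces of Euler characteristic $1$ if it used that vertex, so once no facet is self-restricted it never does. Attached at an edge, using the edge without its endpoints would need $-1$ self-restricted facets. The paper instead attaches:
\begin{itemize}
\item a M\"obius strip at each relative vertex (forced locally, Lemma~\ref{lem:PwGcyl});
\item a tetrahedral boundary at each relative edge (forced only because $\chi=0$, Lemma~\ref{lem:PwGbdryS2});
\item a disjoint triangle to cover $\emptyset$ (Proposition~\ref{prop:RelToAbs}).
\end{itemize}
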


It follows easily that the same holds for $d\geq2$. On the other
hand, the problem admits a polynomial algorithm in dimension one \cite[Problem 35]{Kaibel/Pfetsch:2003}.
\begin{cor}
\label{cor:MainDimD}Let $d\geq2$. \Partitionability{} is NP-complete,
even when restricted to pure $d$-dimensional simplicial complexes.
\end{cor}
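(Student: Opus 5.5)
Membership in NP holds in every dimension by Noble's result \cite{Noble:1996}: a partitioning scheme $\varphi$ is a certificate, and checking that the intervals $[\varphi(\sigma),\sigma]$ are disjoint and cover $\Delta$ takes time polynomial in the number of faces. In fixed dimension $d$ the number of faces is at most $2^{d+1}$ times the number of facets. So it remains to prove NP-hardness, and I will reduce from the $2$-dimensional case of Theorem~\ref{thm:Main} by a dimension-raising construction that preserves partitionability in both directions.

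The construction is the cone. Given a pure $2$-dimensional complex $\Delta$, form $\Delta' = \Delta * \{v_1,\dots,v_{d-2}\}$, the join with a $(d-3)$-simplex, which is an iterated cone. This is pure of dimension $d$, and its list of facets is computed in polynomial time (same number of facets). It suffices to show that $\Delta$ is partitionable if and only if $\cone\Delta = \Delta * \{v\}$ is partitionable, and then iterate.

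For the forward direction, if $\varphi$ partitions $\Delta$, set $\varphi'(\sigma\cup\{v\}) = \varphi(\sigma)$. Every face of $\cone\Delta$ has the form $\tau$ or $\tau\cup\{v\}$ with $\tau\in\Delta$. The face $\tau$ lies in the unique interval $[\varphi(\sigma),\sigma]$ containing it, and then $\tau\cup\{v\}$ lies in $[\varphi(\sigma),\sigma\cup\{v\}]$. Uniqueness holds because membership in $[\varphi(\sigma),\sigma\cup\{v\}]$ is decided by $\tau$ alone.

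For the converse, suppose $\varphi'$ partitions $\cone\Delta$. Its facets are exactly $\sigma\cup\{v\}$ for facets $\sigma$ of $\Delta$. Set $\varphi(\sigma) = \varphi'(\sigma\cup\{v\}) \setminus \{v\}$. I must check that each face $\tau\in\Delta$ lies in exactly one interval $[\varphi(\sigma),\sigma]$.

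\begin{itemize}
\item \emph{Existence.} The face $\tau$ itself lies in some $[\varphi'(\sigma\cup v),\sigma\cup v]$. Then $v\notin\varphi'(\sigma\cup v)$, so $\varphi(\sigma) = \varphi'(\sigma\cup v) \subseteq \tau \subseteq \sigma$.
\item \emph{Uniqueness.} Suppose $\tau\in[\varphi(\sigma_1),\sigma_1]\cap[\varphi(\sigma_2),\sigma_2]$. Then $\tau\cup\{v\}$ lies in both $[\varphi'(\sigma_i\cup v),\sigma_i\cup v]$, since $\varphi'(\sigma_i\cup v)\subseteq\varphi(\sigma_i)\cup\{v\}\subseteq\tau\cup\{v\}$. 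Disjointness of the partition of $\cone\Delta$ forces $\sigma_1=\sigma_2$.
\end{itemize}

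The only step needing care is this converse direction, specifically the uniqueness argument, which passes to $\tau\cup\{v\}$ rather than $\tau$. Everything else is routine. Combining the equivalence with Theorem~\ref{thm:Main} gives NP-hardness for pure $d$-dimensional complexes, and with NP-membership, NP-completeness.
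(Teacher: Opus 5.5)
Your proof is correct and follows the paper's route: the paper also proves that $\Delta$ is partitionable if and only if $\cone\Delta$ is, iterates the cone to reach dimension $d$, and takes membership in NP from Noble. The only cosmetic difference is that the paper first observes $\varphi'(\sigma\cup v)\subseteq\sigma$ for every facet (the face $\sigma$ lies only in the facet $\sigma\cup v$), so your ``$\setminus\{v\}$'' never actually removes anything, and your explicit existence/uniqueness check spells out what the paper calls ``easily seen.''
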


The proof here does not follow the approach suggested in \cite{Kleinschmidt/Onn:1995}.
Indeed, a computer search for a ``switching complex'' (see Remark~\ref{rem:SwitchingComplex})
in the sense of Kleinschmidt and Onn was not successful. I will instead
give a reduction directly from \prob{1-In-3-Sat}. A main ingredient
over the approach of Kleinschmidt and Onn will be control of the Euler
characteristic of the simplicial complex produced by the reduction.

Partitionability is frequently studied in conjunction with shellable
simplicial complexes. Indeed, any shellable complex is partitionable.
The \prob{Shellability} problem asks whether a simplicial complex
(given by a list of facets) is shellable. \prob{Shellability} was
shown to be NP-complete by Goaoc, Paták, Patáková, Tancer, and Wagner
\cite{Goaoc/Patak/Patakova/Tancer/Wagner:2019} (see also \cite{Patak/Tancer:2023,Santamaria-Galvis/Woodroofe:2021}).
The simplicial complexes used for the reduction in \cite{Goaoc/Patak/Patakova/Tancer/Wagner:2019}
or in \cite{Santamaria-Galvis/Woodroofe:2021} are built by taking
a $2$-sphere for each variable, and wrapping clause gadgets partway
around the equators. Truth assignments correspond to (homology facets
in) the north/south hemispheres of the variable spheres. The complexes
are Cohen-Macaulay by \cite[Lemma 2.2]{Duval/Goeckner/Klivans/Martin:2016},
and one can check using the tools of Section~\ref{sec:PartitionWGluing}
below that the construction in \cite{Santamaria-Galvis/Woodroofe:2021}
is always partitionable. In contrast, the simplicial complexes that
we will build will have trivial Euler characteristic, but highly nontrivial
homology; thus, they will never be Cohen-Macaulay.

The \emph{Stanley depth} of a simplicial complex is a numeric parameter
arising from commutative algebra considerations \cite[Section 5]{Stanley:1982},
see \cite{Herzog/Jahan/Yassemi:2008}. It extends partitionability
in the sense that a pure $d$-dimensional simplicial complex is partitionable
if and only if it has Stanley depth of $d+1$ (equivalently, Stanley
depth $\geq d+1$) \cite[Section 4]{Herzog/Jahan/Yassemi:2008}. The
associated \prob{Stanley Depth} decision problem asks, given a simplicial
complex $\Delta$ and a parameter $k$, is the Stanley depth of $\Delta$
at least $k$?
\begin{cor}
Let $d\geq2$. \prob{Stanley Depth} is NP-complete, even when restricted
to pure $d$-dimensional simplicial complexes.
\end{cor}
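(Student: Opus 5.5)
The plan is to derive this from Corollary~\ref{cor:MainDimD} together with the characterization of Herzog, Jahan and Yassemi: a pure $d$-dimensional simplicial complex is partitionable if and only if its Stanley depth is at least $d+1$ \cite[Section 4]{Herzog/Jahan/Yassemi:2008}.

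\emph{Hardness.} Hardness will follow from a trivial reduction. Given a pure $d$-dimensional complex $\Delta$, an instance of \Partitionability{}, we output the instance $(\Delta, d+1)$ of \prob{Stanley Depth}. This map is clearly polynomial-time, since we copy the facet list and write down $d+1$. It preserves yes/no answers by the equivalence just quoted. Since \Partitionability{} restricted to pure $d$-dimensional complexes is NP-hard by Corollary~\ref{cor:MainDimD}, so is \prob{Stanley Depth} restricted to the same class.

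\emph{Membership in NP.} This is the step that needs some care. The certificate will be a Stanley decomposition in the combinatorial form of \cite{Herzog/Jahan/Yassemi:2008}. It is a partition of the face poset of $\Delta$ into intervals $[\varphi(\tau),\tau]$, where the tops $\tau$ are faces of $\Delta$ (not necessarily facets), and the Stanley depth achieved is the minimum of $|\tau|$ over the intervals.
\begin{itemize}
\item \emph{Size of the certificate.} When $d$ is fixed, the number of faces of $\Delta$ is at most $2^{d+1}$ times the number of facets, so it is polynomial in the input. Each interval contains at least one face, so a partition has at most that many intervals and can be listed explicitly.
\item \emph{Verification.} We check that each listed $\tau$ is a face of $\Delta$ and that $\varphi(\tau)\subseteq\tau$. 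We check that the intervals are pairwise disjoint and cover all faces, by enumerating faces and testing membership. Finally we check that $\min|\tau|\ge k$. All of this runs in polynomial time.
\end{itemize}
If one prefers not to rely on fixed $d$, Noble's argument \cite{Noble:1996} that \Partitionability{} lies in NP even for unbounded dimension would handle that case.

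\emph{Expected obstacle.} The one point to be careful about is that Stanley depth of a simplicial complex is here measured by the partition of the face poset, as in \cite{Herzog/Jahan/Yassemi:2008}, so that the certificate is finite and combinatorial. Given that, the corollary follows immediately from the two parts above.
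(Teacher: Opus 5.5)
Your proposal is correct and follows the paper's route: the corollary is read off from Corollary~\ref{cor:MainDimD} via the Herzog--Jahan--Yassemi equivalence (a pure $d$-dimensional complex is partitionable iff its Stanley depth is at least $d+1$). Your explicit NP-membership check for fixed $d$, which lists the polynomially many faces as an interval partition, supplies a detail the paper leaves implicit. The one soft spot is your closing aside about Noble: his argument concerns intervals topped by facets, and for general $k$ the tops need not be facets, so it is not clear that an optimal partition has polynomially many intervals. The statement only concerns fixed $d$, though, so nothing depends on that aside.
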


Ichim, Katthän, and Moyano-Fernández give an algorithm in \cite{Ichim/Katthan/Moyano-Fernandez:2017}
to find Stanley depth, but make no particular claim of efficiency.

\subsection*{Organization}

The paper is organized as follows. In Section~\ref{sec:Preliminaries},
we give background material on simplicial complexes and partitionability.
In Section~\ref{sec:Reduction}, we construct the necessary simplicial
complex gadgets and describe the reduction. In Section~\ref{sec:PartitionWGluing},
we introduce the notion of a partitioning with gluing, used to localize
and combine partitionings. In Section~\ref{sec:MainProofs}, we prove
Theorem~\ref{thm:Main}. In Section~\ref{sec:Remarks}, we make
additional remarks and discuss open questions.

\subsection*{AI disclosure}

A precursor of the $V^{0}$ gadget was discovered in a computer search
carried out by Claude Opus 5.5, and the argument was then developed
and refined in conjunction with Opus and Claude Fable 5.1 models.
Figures were drawn by hand and converted to TikZ by Opus. The paper
is human written.

\section*{Acknowledgements}

Discussing partitionability with Andrés Santamaría-Galvis has been
enlightening over the years. I thank the University of Michigan Department
of Mathematics for its hospitality during the 2026--2027 academic
year.

\section{\label{sec:Preliminaries}Preliminaries}

We assume that the reader has general familiarity with NP-completeness,
on the level of \cite{Papadimitriou:1994} or similar. We review basics
of simplicial complexes.

A \emph{simplicial complex} is simply a hereditary set system: a family
of sets $\Delta$ (called \emph{faces}) so that if $\sigma\in\Delta$
and $\tau\subseteq\sigma$, then also $\tau\in\Delta$. Simplicial
complexes are important in discrete geometry, as we may identify a
singleton set with a vertex, a doubleton with an edge, a $3$-element
set with a triangle, and so forth. A simplicial complex is \emph{pure}
\emph{$d$-dimensional} if all of its \emph{facets} (maximal faces)
have the cardinality $d+1$, and so \emph{dimension} $d$. All simplicial
complexes discussed in this paper are finite.

The main setting that we focus on is a pure $2$-dimensional complex,
which we see consists of a set of triangles, together with all edges
and vertices that are contained in a triangle. The empty set is a
face of every non-trivial simplicial complex.

A \emph{relative simplicial complex} is a pair $(\Delta,\Sigma)$,
where $\Delta$ is a simplicial complex and $\Sigma$ is a simplicial
complex with $\Sigma\subseteq\Delta$. As a set system, we consider
$(\Delta,\Sigma)$ as consisting of the faces of $\Delta$ that are
not in $\Sigma$. Thus, a relative simplicial complex is partitionable
if the set system $(\Delta,\Sigma)$ may be partitioned as in the
introduction: 
\[
(\Delta,\Sigma)=\dot{\bigcup_{\sigma\,\text{facet}}}[\varphi(\sigma),\sigma].
\]
 Equation (\ref{eq:fvechvec}) goes through without change. 

We will occasionally need to distinguish between the simplicial complexes
$\Sigma=\emptyset$ (with no faces) and $\Sigma=\{\emptyset\}$ (having
the empty set as its unique face).

The \emph{reduced Euler characteristic} of the simplicial complex
$\Delta$ is given by 
\[
\tilde{\chi}(\Delta):=-f_{0}+f_{1}-f_{2}+f_{3}-\cdots,
\]
where $f_{i}$ is the number of faces in $\Delta$ of cardinality
$i$. Thus, the reduced Euler characteristic includes the empty set
$f_{0}$ in the alternating sum of face numbers. Since we have $f_{0}=1$
except in the degenerate case, we see that $\tilde{\chi}(\Delta)$
is one smaller than the (unreduced) Euler characteristic $\chi(\Delta)$.
The \emph{(reduced) Euler characteristic} of a relative simplicial
complex $(\Delta,\Sigma)$ has the same definition, where $f_{i}$
is the number of faces of $\Delta$ that are not faces of $\Sigma$.
Thus, $\chi(\Delta,\emptyset)=\tilde{\chi}(\Delta)$ while $\chi(\Delta,\Sigma)=\tilde{\chi}(\Delta)-\tilde{\chi}(\Sigma)=\chi(\Delta)-\chi(\Sigma)$
when $\Sigma\neq\emptyset$. 

We record that the $h_{d+1}$ entry in equation (\ref{eq:fvechvec})
has a nice description:
\begin{lem}
\label{lem:PartEulerChar}If $(\Delta,\Sigma)$ is a pure $d$-dimensional
relative complex with $f_{i}$ and $h_{i}$ as in equation (\ref{eq:fvechvec}),
then $h_{d+1}=(-1)^{d}\chi(\Delta,\Sigma)$.
\end{lem}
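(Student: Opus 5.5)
The plan is to evaluate the polynomial identity (\ref{eq:fvechvec}) at $x=-1$. On the right-hand side, every term $h_{k}x^{k}(1+x)^{d+1-k}$ with $k\leq d$ contains a positive power of $(1+x)$, so it vanishes at $x=-1$. The only surviving term is $h_{d+1}(-1)^{d+1}$.

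On the left-hand side, evaluating at $x=-1$ gives $f_{0}-f_{1}+f_{2}-\cdots$. Here $f_{i}$ counts the faces of cardinality $i$ in the relative complex, that is, faces of $\Delta$ not in $\Sigma$. By the definition of the reduced Euler characteristic of a relative complex, this alternating sum equals $-\chi(\Delta,\Sigma)$. Equating the two sides gives $-\chi(\Delta,\Sigma)=(-1)^{d+1}h_{d+1}$, and multiplying by $(-1)^{d+1}$ yields $h_{d+1}=(-1)^{d}\chi(\Delta,\Sigma)$.

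The one point needing care is the status of (\ref{eq:fvechvec}) itself. It is stated for partitioning schemes, but the lemma should hold for any pure relative complex, as the paper's remark suggests. I would handle this as follows.
\begin{itemize}
\item If a partitioning scheme exists, (\ref{eq:fvechvec}) follows by counting: each interval $[\varphi(\sigma),\sigma]$ with $|\varphi(\sigma)|=k$ contributes $x^{k}(1+x)^{d+1-k}$. The paper notes this carries over verbatim to relative complexes.
\item In general, I would define the $h_{i}$ by (\ref{eq:fvechvec}). The polynomials $x^{k}(1+x)^{d+1-k}$ for $k=0,\dots,d+1$ are triangular with respect to the monomial basis, so they form a basis of polynomials of degree at most $d+1$. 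Hence the $h_{i}$ are uniquely determined, and the same evaluation at $x=-1$ applies.
\end{itemize}
Once (\ref{eq:fvechvec}) is available, there is no real obstacle; the only thing to check is the sign bookkeeping.
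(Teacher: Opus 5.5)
Your proof is correct and is the paper's argument: the paper's whole proof is ``Plug in $x=-1$,'' and your sign bookkeeping, $-\chi(\Delta,\Sigma)=(-1)^{d+1}h_{d+1}$, matches the paper's convention $\tilde{\chi}=-f_{0}+f_{1}-f_{2}+\cdots$. Your extra step, which defines the $h_{i}$ through the basis $x^{k}(1+x)^{d+1-k}$, is a valid but unneeded generalization, since the paper only applies the lemma when a partitioning exists.
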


\begin{proof}
Plug in $x=-1$.
\end{proof}
In particular, if $\chi(\Delta,\Sigma)=0$ and $\varphi$ is a partitioning
of $(\Delta,\Sigma)$, then there is no facet $\sigma$ with $\varphi(\sigma)=\sigma$. 

Passing from $2$-dimensional complexes to higher-dimensional complexes
is not difficult. The \emph{cone} of the simplicial complex $\Delta$
is the simplicial complex $\cone\Delta$ which adds one new \emph{apex}
vertex $a$ to the vertex set of $\Delta$, and whose facets are $\left\{ \sigma\cup a:\sigma\,\text{facet of }\Delta\right\} $.
I did not find the following lemma in the literature in this form,
although \cite[Lemma 3.6]{Herzog/Vladoiu/Zheng:2009} generalizes
to Stanley depth of modules.
\begin{lem}
If $\Delta$ is a pure $d$-dimensional complex, then $\Delta$ is
partitionable if and only if $\cone\Delta$ is partitionable.
\end{lem}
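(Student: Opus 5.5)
The forward direction is a direct construction. Suppose $\varphi$ is a partitioning scheme of $\Delta$. The facets of $\cone\Delta$ are exactly $\sigma\cup a$ for $\sigma$ a facet of $\Delta$, so I define $\psi(\sigma\cup a)=\varphi(\sigma)$. Then
\[
[\psi(\sigma\cup a),\sigma\cup a]=[\varphi(\sigma),\sigma]\,\dot\cup\,\{\tau\cup a:\tau\in[\varphi(\sigma),\sigma]\}.
\]
Every face of $\cone\Delta$ is either a face $\tau$ of $\Delta$ or $\tau\cup a$ with $\tau\in\Delta$. Since the intervals $[\varphi(\sigma),\sigma]$ partition $\Delta$, these doubled intervals partition $\cone\Delta$.

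For the converse, let $\psi$ be a partitioning scheme of $\cone\Delta$. I intend to show that $\psi(\sigma\cup a)$ never contains $a$, and then that $\varphi(\sigma):=\psi(\sigma\cup a)$ partitions $\Delta$.

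The key observation is a counting argument. The faces of $\cone\Delta$ not containing $a$ are exactly the faces of $\Delta$. Each interval $[\psi(\sigma\cup a),\sigma\cup a]$ behaves in one of two ways:
\begin{itemize}
\item if $a\in\psi(\sigma\cup a)$, it contains no face lacking $a$;
\item if $a\notin\psi(\sigma\cup a)$, its faces lacking $a$ form exactly $[\psi(\sigma\cup a),\sigma]$, and its faces containing $a$ are the $a$-cones of these, so it has as many faces without $a$ as with $a$.
\end{itemize}
Let $X$ be the set of facets with $a\in\psi$. Summing over all intervals, the number of faces of $\cone\Delta$ without $a$ equals (number of faces with $a$) minus $\sum_{\sigma\in X}|[\psi(\sigma\cup a),\sigma\cup a]|$. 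But the faces without $a$ (the faces $\tau\in\Delta$) and the faces with $a$ (the faces $\tau\cup a$) are in bijection. So the sum over $X$ is zero. Each interval is nonempty, hence $X=\emptyset$.

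Once $X=\emptyset$, intersecting the partition of $\cone\Delta$ with the faces not containing $a$ shows that the intervals $[\varphi(\sigma),\sigma]$ partition $\Delta$.

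The only delicate step is this counting. In particular it covers the empty face: the empty face lies outside $a$ and must be covered by some interval with $\psi=\emptyset$, which the count handles automatically. Purity is not actually needed in this argument. I expect no serious obstacle; the main care is in the bookkeeping of the bijection $\tau\leftrightarrow\tau\cup a$.
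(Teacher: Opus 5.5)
Your proof is correct, and your forward direction is the same as the paper's. For the converse you take a different, global route. You use the bijection $\tau\leftrightarrow\tau\cup a$ between apex-free and apex-containing faces of $\cone\Delta$. Each interval $[\psi(\sigma\cup a),\sigma\cup a]$ is balanced between the two kinds of faces when $a\notin\psi(\sigma\cup a)$, and lies entirely on the apex side otherwise. The total count then forces every lower endpoint to avoid $a$.

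The paper argues locally instead. A facet $\sigma$ of $\Delta$, viewed as a face of $\cone\Delta$, lies in only one facet of the cone, namely $\sigma\cup a$. So $\sigma$ must be covered by that facet's interval, which gives $\psi(\sigma\cup a)\subseteq\sigma$ immediately, for each $\sigma$ separately.

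The paper's version is shorter and names an explicit witness face ($\sigma$ itself) for each facet. Yours needs the whole partition at once, but it uses only that every upper endpoint contains $a$, not that the upper endpoints are exactly the facets $\sigma\cup a$. So it applies to any interval decomposition of $\cone\Delta$ whose tops all contain the apex. It is also in the same counting spirit as Lemma~\ref{lem:PartEulerChar} and the edge counts in Section~\ref{sec:PartitionWGluing}. As you observed, neither argument actually needs purity.
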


\begin{proof}
Let $a$ be the apex vertex of $\cone\Delta$. If $\varphi$ is a
partitioning of $\Delta$, then the map sending $\sigma\cup a$ to
$\varphi(\sigma)$ is a partitioning of $\cone\Delta$. 

Conversely, if $\varphi$ is a partitioning of $\cone\Delta$, then
the face $\sigma$ of $\cone\Delta$ lies only in the facet $\sigma\cup\{a\}$,
hence $\varphi(\sigma\cup a)\subseteq\sigma$. In particular, the
map sending $\sigma$ to $\varphi(\sigma\cup a)$ is a well-defined
map from facets of $\Delta$ to faces of $\Delta$, and is easily
seen to be a partitioning of $\Delta$.
\end{proof}
In particular, Corollary~\ref{cor:MainDimD} follows immediately
from Theorem~\ref{thm:Main}.

\section{\label{sec:Reduction}The reduction}

We provide a polynomial time reduction from the \prob{1-In-3-Sat}
problem \cite{Schaefer:1978} (see also \cite[LO4]{Garey/Johnson:1979}).
This problem takes as input a Boolean formula in the form 
\[
(\ell_{1}\vee\ell_{2}\vee\ell_{3})\wedge\cdots\wedge(\ell_{3m-2}\vee\ell_{3m-1}\vee\ell_{3m}),
\]
where each \emph{literal} $\ell_{i}$ is either a Boolean variable
$x$ or its negation $\neg x$, and asks whether there is an assignment
of truth values so that each clause $(\ell_{3k+1}\vee\ell_{3k+2}\vee\ell_{3k+3})$
has exactly one true literal (equivalently, exactly two false literals).
It is convenient to assume that each variable $x$ appears in at least
two literals (including in negated form). This assumption causes no
loss of generality, as if $x$ occurs only once, we simply repeat
the clause in which it appears. See Remark~\ref{rem:DupedVars} below.

\subsection{Gadgets}

As is usual in NP-completeness reductions from \prob{3-Sat} and its
variants, the proof proceeds by building gadgets to represent variables
and clauses in the simplicial complex setting. We will start by showing
that \Partitionability{} is NP-complete when expanded to relative
simplicial complexes, and then show that we may restrict back to non-relative
complexes.

We have three main gadgets: a variable gadget $V(k)$, a positive
clause gadget $C^{+}$, and a negated clause gadget $C^{\neg}$, as
we will now describe.

The variable gadget $V^{0}=V(2)$ is formed in two stages. We start
by subdividing a square into four triangles $12r,23r,34r,14r$. We
then attach the boundary of a tetrahedron to the edge $23$, and another
to the edge $14$. We treat the vertex $r$ as being relative. Thus,
$V^{0}$ has $9$ vertices (1 of which is relative), $18$ edges,
and $12$ triangles.

\begin{figure}
% Variable gadget: a chain of copy gadgets, vertex 4 of each identified with vertex 1 of the next.
\begin{tikzpicture}[scale=1]
  \copycoords{a}{0}\copycoords{b}{2}\copycoords{c}{4}\copycoords{z}{7.3}
  \copyedges{a}\copyedges{b}\copyedges{c}\copyedges{z}
  \node at ($(c-4)!0.5!(z-1)+(0,0.8)$) {$\cdots$};
  % hubs (in A)
  \foreach \n in {a,b,c,z} {\node[vA] at (\n-h) {};}
  % chain vertices
  \foreach \p in {b-1,c-1,c-4,z-1} {\node[v] at (\p) {};}
  % outputs
  \foreach \n in {a,b,c,z} {
    \node[vout,label={[lab]above:$\neg x$}] at (\n-2) {};
    \node[vout,label={[lab]above:$x$}] at (\n-3) {};}
  \node[vout,label={[lab]below:$x$}] at (a-1) {};
  \node[vout,label={[lab]below:$\neg x$}] at (z-4) {};
\end{tikzpicture}

\caption{\label{fig:VarGadget}Variable gadget $V(k)$. Each double line edge
has an independent tetrahedral boundary attached. Vertices labeled
$x$ or $\neg x$ (shown in blue) are variable vertices.}
\end{figure}

The variable gadget $V(k)$ is obtained from $k-1$ copies of $V^{0}$
by gluing the vertex $4$ from the $i$th copy to the vertex $1$
of the $(i+1)$st copy. See Figure~\ref{fig:VarGadget}. The \emph{positive
variable vertices} in $V(k)$ consist of the vertex $3$ from all
copies of $V^{0}$ and the (unglued) vertex $1$ from the first copy
of $V^{0}$. The \emph{negated variable vertices} in $V(k)$ consist
of the vertex $2$ from all copies of $V^{0}$ and the (unglued) vertex
$4$ from the last copy of $V^{0}$. Thus, $V(k)$ has $k$ positive
variable vertices and $k$ negated variable vertices. The \emph{relativizable
vertices} consist of the vertex $r$ from all copies of $V^{0}$.

The positive clause gadget $C^{+}$ is pictured in Figure~\ref{fig:ClausePosGadget}.
It consists of three subdivided triangles, each glued to an edge of
a center triangle; see also Section~\ref{subsec:PartitioningGadgets}.
The \emph{hollow vertices} will later be glued to positive or negated
variable vertices. We treat the boundary of $C^{+}$ (shown in red,
and generated by the edges that are only in a single facet) as being
relative.

The negated clause gadget $C^{\neg}$ is formed by gluing three copies
of the complex pictured in Figure~\ref{fig:ClauseNegGadget}~(right)
along the bolded edge. The hollow vertices will later be glued to
positive or negated variable vertices. We treat the boundary of $C^{\neg}$
(shown in red, generated by edges that are in a single facet) as being
relative.

\begin{figure}[h]
\begin{tabular}[b]{rl>{\raggedright}p{1cm}rl}
 &
% The 1-in-3 clause Cl_1 = pqr with its three converters (one per slot).
\begin{tikzpicture}[scale=1.15]
  \clausecoords\clausebase
  \node[voutc,label={[lab,label distance=1pt]270:$\ell_1$}] at (u1) {};
  \node[voutc,label={[lab,label distance=1pt]20:$\ell_2$}] at (u2) {};
  \node[voutc,label={[lab,label distance=1pt]160:$\ell_3$}] at (u3) {};
\end{tikzpicture} &
 &
 &
% One page of the 2-in-3 clause Cl_2 (page s t a, spine st bold) with the converter on its slot s a.
% Page and converter are equilateral triangles forming a rhombus.
\begin{tikzpicture}[scale=1.15]
  \coordinate (t) at (0,0); \coordinate (s) at (0,1.6);
  \coordinate (a) at (-1.386,0.8); \coordinate (w) at (-1.386,2.4);
  \coordinate (u) at (barycentric cs:w=1,a=1,s=1);
  \facet{t}{a}{s}\facet{u}{a}{s}\facet{u}{s}{w}\facet{u}{w}{a}
  \draw[rel] (t) -- (a); \draw[rel] (a) -- (w); \draw[rel] (w) -- (s);
  \draw[e] (a) -- (s);
  \draw[e] (u) -- (a); \draw[e] (u) -- (s); \draw[e] (u) -- (w);
  \draw[spine] (s) -- (t);
  \foreach \x in {t,s,a,w} {\node[vA] at (\x) {};}
  \node[lab,anchor=west] at ($(s)+(0.08,0)$) {$s$};
  \node[lab,anchor=west] at ($(t)+(0.08,0)$) {$t$};
  \node[lab,anchor=east] (lu) at ($(u)+(-0.95,0.15)$) {$\neg\ell_i$};
  \draw[ink!45, line width=0.4pt, shorten >=3.5pt] (lu.east) -- (u);
  \node[voutc] at (u) {};
\end{tikzpicture}\tabularnewline
\end{tabular}

\caption{Clause gadgets.\protect \\
\label{fig:ClausePosGadget}\emph{Left}: Positive clause gadget $C^{+}$.
Hollow vertices $\ell_{1},\ell_{2},\ell_{3}$ will be glued to $x$
or $\neg x$ variable vertices.\protect \\
\emph{Right}: \label{fig:ClauseNegGadget}One third of negated clause
gadget $C^{\neg}$. The full gadget is formed by gluing three copies
of the shown complex along the bolded edge $st$. Hollow vertex $\neg\ell_{i}$
will be glued to an $x$ or $\neg x$ variable vertex.}
\end{figure}

\subsection{Putting the gadgets together}

For ease, we first describe the relative case of the simplicial complex
output by the reduction. We are given a \prob{1-In-3-Sat} instance
with $m$ clauses and $n$ variables. For the variable $x$, let $m_{x}$
be the number of occurrences of $x$ or $\neg x$ as a literal in
the instance. We have assumed without loss of generality that each
$m_{x}$ is at least $2$.

For each $x$, we take $V_{x}$ to be a copy of $V(m_{x})$. The subcomplex
$R_{x}$ consists of all relativizable vertices (together with the
empty set).

For each clause $\ell_{1}\vee\ell_{2}\vee\ell_{3}$, we take both
a copy of $C^{+}$ and also a copy of $C^{\neg}$. We notice that
exactly one of $\ell_{1},\ell_{2},\ell_{3}$ is true if and only if
exactly two of $\neg\ell_{1},\neg\ell_{2},\neg\ell_{3}$ are true.
We glue the three hollow vertices of $C^{+}$ to variable vertices
corresponding to the literals $\ell_{1},\ell_{2},\ell_{3}$, and glue
the three hollow vertices of $C^{\neg}$ to the variable vertices
corresponding to the negated literals $\neg\ell_{1},\neg\ell_{2},\neg\ell_{3}$. 

Thus, for each $x$ appearing in $\ell_{1},\ell_{2},\ell_{3}$, we
have glued to both a positive variable vertex and also to a negated
variable vertex in $V_{x}$. The gluing yields a bijection between
variable vertices (taken over all $V_{x}$) and hollow vertices (taken
over all positive and negated clause gadgets).

Our relative complex thus consists of 
\[
(\Delta,\Sigma):=\bigcup_{x\,\text{variable}}(V_{x},R_{x})\quad\cup\quad\dot{\bigcup_{\text{clauses}}}(C^{+},\bdry C^{+})\quad\cup\quad\dot{\bigcup_{\text{clauses}}}(C^{\neg},\bdry C^{\neg}).
\]

\begin{thm}
\label{thm1-IN-3ReduceRel}Given an instance $I$ of \prob{1-In-3-Sat}
with each variable appearing in at least two literals, let $(\Delta,\Sigma)$
be as constructed above. The relative simplicial complex $(\Delta,\Sigma)$
is partitionable if and only if $I$ is satisfiable.
\end{thm}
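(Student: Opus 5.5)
The plan is to localize the partitioning problem to the gadgets, classify how each gadget can be partitioned given what happens at its shared variable vertices, and then match the local constraints against the 1-in-3 condition.

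\textbf{Localization.} Every face of $(\Delta,\Sigma)$ lies in a single gadget, except the variable vertices themselves. Each such vertex $v$ is shared by exactly one variable gadget $V_x$ and exactly one clause gadget, and it must lie in an interval $[\varphi(\sigma),\sigma]$ with $\varphi(\sigma)=\{v\}$ for some facet $\sigma$ of exactly one of these two gadgets. So a partitioning of $(\Delta,\Sigma)$ is the same thing as the following data:
\begin{itemize}[label={}]
\item a choice, for every variable vertex, of which side ``takes'' it;
\item a partitioning of each gadget relative to its relative part together with the variable vertices it does not take.
\end{itemize}
Since all gadgets are pure $2$-dimensional, I will compute Euler characteristics and use Lemma~\ref{lem:PartEulerChar} to constrain how many variable vertices each gadget can take. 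Each vertex given away raises $\chi$ of the relative complex by one, and $h_3\ge 0$ is forced.

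\textbf{Variable gadget.} For $V^0$ relative to $\{r\}$, the count is $f=(1,8,18,12)$ including the empty set, so $\chi=-1+8-18+12=1$.
\begin{itemize}[label={}]
\item If $V^0$ takes all four corners, then $h_3=1$.
\item Each attached tetrahedral boundary has homology, and I expect that it forces the facets near the pinned edge $23$ (resp.\ $14$) to consume vertices $2,3$ (resp.\ $1,4$) in a coupled way.
\end{itemize}
The key local lemma to establish by case analysis is this: a partitioning of $V^0$ relative to $r$ plus some corners exists if and only if the corners it takes are exactly $\{1,3\}$ or exactly $\{2,4\}$ (after accounting for the $h_3$ count). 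Chaining copies at the glued vertices $4\sim 1$, each glued vertex is taken by exactly one copy. This propagates the choice, so $V(k)$ takes either all positive variable vertices or all negated ones, up to the two end vertices.

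I need to check the endpoint bookkeeping carefully. The intended statement is that $V_x$ surrenders precisely the variable vertices of one polarity, namely all positive ones if $x$ is true or all negated ones if $x$ is false. The surrendered vertices must then be taken by clause gadgets, so a true literal corresponds to a vertex that the clause gadget takes.

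\textbf{Clause gadgets.} For $C^+$ relative to its boundary, each hollow vertex $\ell_i$ is either taken by $C^+$ or already covered from the variable side. I will show that $C^+$ admits a partitioning exactly when it takes precisely one of its hollow vertices. The argument combines the Euler characteristic, which pins down the number taken, with an explicit check that any single one works, using the symmetry of the three subdivided triangles. Likewise, $C^{\neg}$, formed from three pages on the spine $st$, should admit a partitioning exactly when it takes precisely two of its hollow vertices. Again I will combine the $\chi$ count with explicit schemes, and a local argument that one or three is impossible.

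\textbf{Assembly and main obstacle.} With these lemmas, a satisfying assignment yields the choices at all shared vertices: true literals are taken by the clause side. Exactly one true literal in each $C^+$ and exactly two true negated literals in each $C^{\neg}$ make every local partitioning exist, and the union of the local partitionings is a partitioning of $(\Delta,\Sigma)$. Conversely, a partitioning restricts to local partitionings, which forces a consistent polarity per variable and the 1-in-3 condition per clause.

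The main obstacle is the \emph{only if} direction of the variable gadget lemma. Euler characteristic alone only counts how many corners are taken, not which ones. Excluding mixed choices such as $\{1,2\}$ requires a genuine local argument about the tetrahedra on the pinned edges. Each such tetrahedron contributes a sphere, whose top face must be absorbed, and this forces a particular edge orientation. I would attempt this by a direct case analysis on $\varphi$ for the four square triangles and on which facet covers the pinned edge.
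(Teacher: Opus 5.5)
Your overall architecture matches the paper's: localize at the shared variable vertices, classify each gadget, then match. However, the clause-gadget lemmas you propose are false, with the polarity inverted. Count faces. $(C^{+},\bdry C^{+})$ has $3$ hollow vertices, $12$ non-relative edges and $10$ triangles. Suppose a partitioning with gluing takes $t$ hollow vertices, sends $b$ facets to edges and $c$ facets to themselves. Then $t+b+c=10$ and $2t+b=12$, so $t=2+c$. Hence $C^{+}$ takes at least two hollow vertices, exactly two when no facet is self-matched, and never just one (that would force $h_3=-1$). For $C^{\neg}$ ($3$ hollow vertices, $13$ edges, $12$ triangles) the same count gives $t=1+c$, so it takes exactly one, not two.

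The root is a sign slip. Moving a vertex into the relative subcomplex \emph{lowers} $\chi$ by one, since vertices enter $\chi$ with a plus sign. So under your convention that clause gadgets take the true literals, the gadgets would enforce 2-in-3 rather than 1-in-3, and the explicit schemes you plan for the ``if'' direction do not exist. The bookkeeping must be reversed. $V_x$ uses the variable vertices of the true polarity, and the clause gadgets absorb the false literals: two in $C^{+}$, one in $C^{\neg}$. Each of these is exactly the 1-in-3 condition.

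Second, the requirement $h_3\ge 0$ applied gadget by gadget only gives lower bounds: $V_x$ takes at least $m_x$ variable vertices, $C^{+}$ at least two, $C^{\neg}$ at least one. Without exactness your local ``if and only if'' statements fail. $V^{0}$ can take all four corners: match each square triangle to a different corner, and give each tetrahedral boundary one self-matched facet. Likewise $C^{+}$ can take all three hollow vertices with its central triangle self-matched.

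The missing idea is the paper's global computation $\chi(\Delta,\Sigma)=\sum_x m_x-3m=0$. By Lemma~\ref{lem:PartEulerChar} this forbids self-matched facets everywhere and makes every lower bound tight. Equivalently, the $6m$ variable vertices are each taken exactly once, and the lower bounds already sum to $6m$. Note also that $\emptyset\in R_x$, so $\chi(V^{0},\{\emptyset,r\})=2$, not $1$.

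Once $c=0$ is known, the $V^{0}$ step you flag as the main obstacle is short. Lemma~\ref{lem:PwGbdryS2} makes each tetrahedral boundary use its pinned edge but not its endpoints. The four square triangles must then cover $12$, $34$ and the four spokes, giving two vertex-type and two edge-type facets. Inspection leaves only $\{1,3\}$ and $\{2,4\}$.
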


It is easy to extend this to non-relative simplicial complexes: given
$\Delta$ and $\Sigma$ as above, we let $\Delta^{+}$ be obtained
from $\Delta$ by adding a disconnected triangle, gluing an independent
copy of the triangulated $5$-vertex Möbius strip to each vertex of
$\Sigma$, and gluing an independent copy of the tetrahedral boundary
to each edge of $\Sigma$.
\begin{cor}
\label{cor:1-IN-3Reduce} Given an instance $I$ of \prob{1-In-3-Sat}
with each variable appearing in at least two literals, let $\Delta^{+}$
be as constructed above. The simplicial complex $\Delta^{+}$ is partitionable
if and only if $I$ is satisfiable.
\end{cor}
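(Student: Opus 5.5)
The plan is to reduce to Theorem~\ref{thm1-IN-3ReduceRel}. I will show that in any partitioning of $\Delta^{+}$ the attached pieces cover exactly the faces of $\Sigma$ together with their own new faces. The facets of $\Delta$ are then forced to cover exactly $(\Delta,\Sigma)$. The tool is a local one. A new face that lies in an attached piece $P$ (the disconnected triangle, a Möbius strip, or a tetrahedral boundary) but not in $\Delta$ is contained only in facets of $P$. Hence the faces covered by the facets of $P$ form a partitionable relative complex $(P,\Gamma)$, where $\Gamma$ is a subcomplex of $P\cap\Delta$ (together with $\emptyset$). By Lemma~\ref{lem:PartEulerChar} in dimension $2$, this requires $h_{3}=\chi(P,\Gamma)\geq0$.

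\emph{Forcing.} The disconnected triangle is its own unique facet, so its single interval must be $[\emptyset,\sigma]$, which covers $\emptyset$.

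For a Möbius strip $M$ glued at a vertex $v$, the only possible choices are $\Gamma=\{\emptyset\}$ or $\Gamma=\{\emptyset,v\}$, since $\emptyset$ is already taken by the triangle. Using $\tilde{\chi}(M)=-1$, we get $\chi(M,\{\emptyset,v\})=-1<0$, so $M$ must cover $v$.

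For a tetrahedral boundary $T$ glued along an edge $uw$, the candidates are the subcomplexes of $\{\emptyset,u,w,uw\}$ containing $\emptyset$. A direct count gives
\begin{itemize}
\item $\chi(T,\{\emptyset\})=-2$,
\item $\chi(T,\{\emptyset,u\})=-1$,
\item $\chi(T,\overline{uw})=-1$,
\item $\chi(T,\{\emptyset,u,w\})=0$.
\end{itemize}
Only the last is nonnegative, so $T$ covers exactly the edge $uw$ and not its endpoints. I also need to check that $\Gamma$ is forced to be a subcomplex: the faces of $P\cap\Delta$ not covered by $P$ should be closed downward. I will verify this by noting that if $P$ covers a face $\tau\in\Delta$, then every coface of $\tau$ inside $P$ is a new face, and it is covered by $P$ anyway. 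Alternatively, I will simply enumerate the few possibilities, which is routine.

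\emph{Conclusion of the forward direction.} Every face of $\Sigma$ is now covered by an attached piece: $\emptyset$ by the triangle, each vertex of $\Sigma$ by its Möbius strip, and each edge of $\Sigma$ by its tetrahedron. Here I use that $\Sigma$ has no triangles. By disjointness, the facets of $\Delta$ cover precisely the faces of $\Delta\setminus\Sigma$. Their intervals are intervals in $\Delta$, so restricting $\varphi$ to the facets of $\Delta$ gives a partitioning of $(\Delta,\Sigma)$. Theorem~\ref{thm1-IN-3ReduceRel} then yields satisfiability.

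\emph{Converse.} Starting from a partitioning of $(\Delta,\Sigma)$ given by Theorem~\ref{thm1-IN-3ReduceRel}, I extend it using three explicit partitionings:
\begin{itemize}
\item $[\emptyset,\sigma]$ on the disconnected triangle;
\item a partitioning of $(M,\{\emptyset\})$ for each Möbius strip, which has $\chi=0$ and which I will exhibit explicitly on the $5$-vertex triangulation, for instance via a shelling-like order in which the restriction faces are $v$, then three edges, then single vertices, with every vertex-labelled facet chosen to contain $v$;
\item a partitioning of $(T,\{\emptyset,u,w\})$ for each tetrahedron, for example $uwa\mapsto uw$, $uwb\mapsto b$, $uab\mapsto ua$, $wab\mapsto wa$ (checking coverage of $a,b$ and all six edges).
\end{itemize}
The main nuisance, rather than a true obstacle, is the explicit Möbius strip partitioning. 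The second point needing care is that pieces glued at shared vertices (several tetrahedra, plus a Möbius strip, at the same vertex of $\Sigma$) cover disjoint sets of faces. This holds by construction, since each piece covers only its own new faces plus its designated face of $\Sigma$.
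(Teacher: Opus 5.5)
Your overall plan matches the paper's: force the attached pieces to cover exactly the faces of $\Sigma$, then restrict to $\Delta$ and apply Theorem~\ref{thm1-IN-3ReduceRel}. Your steps for the disconnected triangle and the Möbius strips are correct; $\chi(M,\{\emptyset,v\})=4-10+5=-1<0$ does force $M$ to cover $v$.

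\textbf{The gap is the tetrahedral step.} With the same sign convention you used for $M$, the correct values are $\chi(T,\{\emptyset\})=4-6+4=2$, $\chi(T,\{\emptyset,u\})=1$, $\chi(T,\overline{uw})=2-5+4=1$, and $\chi(T,\{\emptyset,u,w\})=0$. You have the first three with the wrong sign. All four are nonnegative, so the local inequality $h_3\geq 0$ excludes nothing.

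No sharper local argument can repair this. $T$ really does admit a local scheme covering only its new faces, namely $[a,uwa]$, $[b,uwb]$, $[ab,uab]$, $[wab,wab]$. In that case $uw$ must be covered by a facet of $\Delta$, i.e.\ a clause gadget absorbing one of its boundary edges. Then the restriction of $\varphi$ to $\Delta$ is not a partitioning of $(\Delta,\Sigma)$, and your argument has no way to exclude this. A warning sign is that your proof never uses $\chi(\Delta,\Sigma)=0$, although Proposition~\ref{prop:RelToAbs} explicitly needs it when $\Sigma$ is $1$-dimensional.

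\textbf{The missing ingredient is the global Euler characteristic.} $\Delta^{+}$ is the disjoint union of $(\Delta,\Sigma)$, the disconnected triangle (including $\emptyset$), copies of $(M,\{\emptyset\})$, and copies of $(T,\{\emptyset,u,w\})$. Each has Euler characteristic $0$; for $(\Delta,\Sigma)$ this is Section~\ref{subsec:RelEulerChar}. So $\tilde{\chi}(\Delta^{+})=0$, and Lemma~\ref{lem:PartEulerChar} gives $h_3=0$: no facet is its own restriction face.

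Hence the faces covered by each $T$ have alternating sum exactly $0$. The new faces of $T$ contribute $2-5+4=1$, so the covered part of $\{u,w,uw\}$ must contribute $-1$. This forces it to be exactly $\{uw\}$. That is what Lemma~\ref{lem:PwGbdryS2} together with the Euler-characteristic step in Proposition~\ref{prop:RelToAbs} achieves.

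This also makes your subcomplex check unnecessary. That check's justification was wrong anyway: for $\tau=u$, the coface $uw$ lies in $\Delta$, not among the new faces. The identity behind Lemma~\ref{lem:PartEulerChar} holds for any disjoint union of intervals topped by triangles, whether or not the covered set is a relative complex.

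\textbf{Smaller errors in the converse.} Your tetrahedron scheme never covers $a$ or $ab$; take $uab\mapsto a$ instead of $uab\mapsto ua$. For $(M,\{\emptyset\})$ one has $h=(0,5,0,0)$, so every restriction face must be a vertex and none can be an edge. With triangles $\{i,i+1,i+2\}$ (indices mod $5$), the map $\{i,i+1,i+2\}\mapsto i$ works.
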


It is clear that the reduction may be implemented in polynomial time.
Theorem~\ref{thm:Main} is an immediate consequence of Corollary~\ref{cor:1-IN-3Reduce}.

\subsection{Size of the reduction}

We count the triangles in $\Delta$ and in $\Delta^{+}$. We note
that $C^{+}$ has $9$ vertices ($6$ of them relative), $18$ edges
($6$ relative), and $10$ triangles; $C^{\neg}$ has $11$ vertices
($8$ relative), $22$ edges ($9$ relative), and $12$ triangles.

Each $V_{x}$ contributes $(m_{x}-1)\cdot12$ triangles to $\Delta$,
while each clause contributes $10+12$ triangles in $C^{+}$ and $C^{\neg}$
to $\Delta$. Noting that $\sum m_{x}=3m$, the relative complex $(\Delta,\Sigma)$
has $58m-12n$ triangles.

Finally, each vertex of $\Sigma$ contributes $5$ additional triangles
to $\Delta^{+}$ (using the $5$ triangle Möbius strip), each edge
contributes $4$ additional triangles, and the empty set contributes
$1$ additional triangle. Thus, each variable $x$ contributes $(m_{x}-1)\cdot5$
additional triangles, and each clause contributes $14\cdot5+15\cdot4$
triangles. We see that the simplicial complex $\Delta^{+}$ has $1+203m-17n$
triangles.

\section{\label{sec:PartitionWGluing}Partitioning complexes with gluing}

The complexes $(\Delta,\Sigma)$ and $\Delta^{+}$ of Theorem~\ref{thm1-IN-3ReduceRel}
and Corollary~\ref{cor:1-IN-3Reduce} are built by gluing together
gadgets along uncomplicated subcomplexes. In this section, we develop
language to describe each ``piece'' of such a partitioning scheme.
The approach here will offer better control of gluing along small
intersections as compared to e.g. \cite[Theorem 3.1]{Duval/Goeckner/Klivans/Martin:2016}.

Let $(\Kappa,\Sigma)$ be a relative simplicial complex, and let $\Gamma$
be another subcomplex of $\Kappa$. It is convenient to require that
no face of $\Gamma$ be a facet of $\Kappa$. We wish to think of
$\Kappa$ as sitting inside a larger (relative) simplicial complex
as a full-dimensional subcomplex, and $\Gamma$ as where $\Kappa$
is glued to the rest of the complex.

We say that a \emph{partitioning with gluing at $\Gamma$} of $(\Kappa,\Sigma)$
is a map $\varphi$ from facets of $(\Kappa,\Sigma)$ to faces of
$(\Kappa,\Sigma)$ where $\varphi(\sigma)\subseteq\sigma$ and so
that 
\begin{enumerate}
\item The sets of faces $[\varphi(\sigma),\sigma]$ are disjoint over facets
$\sigma$ of $(\Kappa,\Sigma\cup\Gamma)$.
\item Each face of $(\Kappa,\Sigma\cup\Gamma)$ is in exactly one of the
$[\varphi(\sigma),\sigma]$.
\end{enumerate}
Thus, each face of $(\Gamma,\Gamma\cap\Sigma)$ is in at most one
of the $[\varphi(\sigma),\sigma]$. If a face $\gamma$ of $\Gamma$
is in one of the $[\varphi(\sigma),\sigma]$, then we say it is \emph{used};
otherwise it is \emph{unused}. 

Thus, if a $d$-dimensional complex may be written as $\Kappa\cup\Lambda$,
where $\Kappa$ and $\Lambda$ are both $d$-dimensional and $\Kappa\cap\Lambda$
is lower dimensional, then a partitioning of $\Kappa\cup\Lambda$
induces a partitioning with gluing at $\Kappa\cap\Lambda$ of $\Kappa$,
and also a partitioning with gluing at $\Kappa\cap\Lambda$ of $\Lambda$.
Given partitionings with gluing of $\Kappa$ and of $\Lambda$, we
may be able to recover a partitioning of $\Kappa\cup\Lambda$ if every
face of the intersection is used in one or the other of $\Kappa$
or $\Lambda$.

Indeed, the following follows immediately from definition:
\begin{prop}
\label{prop:PartitioningFromGluing}Let $\Kappa$ be a pure $d$-dimensional
simplicial complex, where $\Kappa$ is the union of pure $d$-dimensional
complexes $\Kappa=\Kappa_{1}\cup\Kappa_{2}\cup\cdots\cup\Kappa_{n}$;
let $\Sigma$ be a subcomplex of $\Kappa$, and assume that each facet
is in exactly one $K_{i}$. If each $(\Kappa_{i},\Kappa_{i}\cap\Sigma)$
admits a partitioning with gluing $\varphi_{i}$ at $\Kappa_{i}\cap(\bigcup_{j\neq i}\Kappa_{j})$
in such a way that each gluing face not in $\Sigma$ is used by exactly
one $\varphi_{i}$, then $\bigcup\varphi_{i}$ forms a partitioning
of $(\Kappa,\Sigma)$. 
\end{prop}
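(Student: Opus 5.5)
The plan is to verify directly the two defining conditions of a partitioning of $(\Kappa,\Sigma)$ for the combined map $\varphi=\bigcup\varphi_i$. Since each facet of $\Kappa$ lies in exactly one $\Kappa_i$, $\varphi$ is a well-defined map on facets, and $\varphi(\sigma)\subseteq\sigma$ holds because it holds for each $\varphi_i$. Write $\Gamma_i=\Kappa_i\cap(\bigcup_{j\neq i}\Kappa_j)$. First I would note that no face of $\Gamma_i$ is a facet of $\Kappa_i$, as the definition requires. A facet of $\Kappa_i$ lies in no other $\Kappa_j$ (facets belong to exactly one piece, and every piece is pure of the same dimension $d$), so it is not in $\Gamma_i$. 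I would also take the facets of $(\Kappa,\Sigma)$ to be the $d$-faces not in $\Sigma$, and $\varphi(\sigma)\notin\Sigma$, as in each piece.

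The core step is to show that every face $\tau$ of $(\Kappa,\Sigma)$ lies in exactly one interval $[\varphi(\sigma),\sigma]$. Fix such a $\tau$ and split into two cases.

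\emph{Case 1: $\tau$ lies in exactly one piece $\Kappa_i$.} Then $\tau\notin\Gamma_i$, so $\tau$ is a face of $(\Kappa_i,(\Kappa_i\cap\Sigma)\cup\Gamma_i)$. By conditions (1) and (2) for $\varphi_i$, $\tau$ lies in exactly one interval coming from $\varphi_i$. Any interval $[\varphi_j(\sigma),\sigma]$ with $j\neq i$ consists of faces of $\Kappa_j$, so it cannot contain $\tau$. Hence $\tau$ is covered exactly once.

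\emph{Case 2: $\tau$ lies in several pieces.} Then $\tau\in\Gamma_i$ for every $i$ with $\tau\in\Kappa_i$. By hypothesis, $\tau$ (a gluing face not in $\Sigma$) is used by exactly one $\varphi_i$. Within that $\varphi_i$ it is used at most once: a used face is in at most one interval, by condition (1) together with the remark following the definition. Pieces not containing $\tau$ cannot cover it. So again $\tau$ is covered exactly once.

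Finally, I would check that no interval contains a face of $\Sigma$, which holds since each $\varphi_i$ maps into faces of $(\Kappa_i,\Kappa_i\cap\Sigma)$ and intervals are upward-closed from $\varphi(\sigma)\notin\Sigma$. The only point needing care is the interpretation of ``used at most once'' for gluing faces. This is where the definition's condition (1), that the intervals are disjoint, is invoked, and I would make that explicit. Otherwise the argument is bookkeeping.
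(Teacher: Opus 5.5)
Your proposal is correct and is the same direct check of the definitions that the paper intends when it says the proposition ``follows immediately from definition.'' Faces lying in a single piece are covered exactly once by condition (2) for that piece. Gluing faces are covered exactly once by the ``used by exactly one $\varphi_i$'' hypothesis together with the disjointness condition (1).
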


\begin{rem}
Conversely, in the situation of Proposition~\ref{prop:PartitioningFromGluing},
a partitioning of $(\Kappa,\Sigma)$ restricts to partitionings with
gluing of the constituent $(\Kappa_{i},\Kappa_{i}\cap\Sigma)$'s with
the ``used exactly once'' property.
\end{rem}

We are interested in putting restrictions on the used faces of a partitioning
with gluing of an uncomplicated simplicial complex. We give several
examples, which we will make good use of later.
\begin{lem}
\label{lem:PwGempty}Let $\Kappa$ be a $d$-dimensional simplex for
$d>1$ and let $\Gamma$ be $\left\{ \emptyset\right\} $. The unique
partitioning with gluing at $\Gamma$ of $\Kappa=(\Kappa,\emptyset)$
uses the face $\emptyset$.
\end{lem}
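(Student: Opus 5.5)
The plan is to unwind the definition of a partitioning with gluing directly; no earlier results are needed beyond the definitions. Here $\Kappa$ is the full simplex on a vertex set $\sigma$ with $|\sigma| = d+1 \geq 3$. The complex $(\Kappa,\emptyset)$ has exactly one facet, namely $\sigma$, so a partitioning with gluing is a single choice of a face $\varphi(\sigma) \subseteq \sigma$. I also note that $\Gamma = \{\emptyset\}$ contains no facet of $\Kappa$, so the standing convention of this section is satisfied.

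Since there is only one facet, condition (1) (disjointness) holds trivially. The content lies in condition (2). The faces of $(\Kappa, \Sigma\cup\Gamma) = (\Kappa, \{\emptyset\})$ are exactly the nonempty subsets of $\sigma$, and each must lie in $[\varphi(\sigma),\sigma]$.

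First I will show that $\varphi(\sigma) = \emptyset$ is forced. In particular, every singleton $\{v\}$ with $v \in \sigma$ must satisfy $\varphi(\sigma) \subseteq \{v\}$. Because $\sigma$ has at least two distinct vertices $v \neq w$, this gives $\varphi(\sigma) \subseteq \{v\}\cap\{w\} = \emptyset$. Only the existence of two vertices is used here, which the hypothesis $d>1$ certainly provides.

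Second, I will check that $\varphi(\sigma)=\emptyset$ does give a partitioning with gluing. The interval $[\emptyset,\sigma]$ is the set of all faces of $\Kappa$, so it contains every nonempty face exactly once, and condition (2) holds. This establishes existence and uniqueness. Finally, since $\emptyset \in [\varphi(\sigma),\sigma]$, the unique gluing face $\emptyset$ is used.

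There is no real obstacle. The only point requiring care is to read the definition correctly: the faces of $\Gamma$ are excluded from the covering requirement in condition (2), but they may still lie in an interval, and this is exactly what "used" means.
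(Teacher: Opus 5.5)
Your proof is correct, but it reaches the conclusion by a different mechanism than the paper. The paper argues by counting. The simplex has $2^{d+1}$ faces, so $2^{d+1}-1$ nonempty faces must be covered. An interval $[\varphi(\sigma),\sigma]$ with $\varphi(\sigma)\neq\emptyset$ contains at most $2^{d}$ faces, which is too few. You instead exhibit the obstruction directly: every singleton $\{v\}$ must lie above $\varphi(\sigma)$, so $\varphi(\sigma)$ lies in the intersection of two distinct singletons and is empty.

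Your route is more transparent. It also proves the existence and uniqueness asserted in the statement, which the paper leaves implicit, since using $\emptyset$ already forces $\varphi(\sigma)=\emptyset$. The paper's counting version matches the face-counting style it reuses in Lemmas~\ref{lem:PwGcyl}, \ref{lem:PwGbdryS2}, and \ref{lem:PwGtau}. Both arguments need only $d\geq1$, as your remark about two vertices already indicates.
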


\begin{proof}
$K$ has $2^{d+1}$ faces, and any partitioning with gluing that does
not use $\emptyset$ covers at most $2^{d}$ faces.
\end{proof}
\begin{lem}
\label{lem:PwGcyl} Let $\Kappa$ be a Möbius strip or cylinder with
an equal number of vertices and triangles. Let $\Gamma$ be generated
by a single vertex $v$ of $\Kappa$. Every partitioning with gluing
at $\Gamma$ of $(K,\{\emptyset\})$ uses $v$.
\end{lem}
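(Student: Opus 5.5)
The plan is a signed count of faces, in the spirit of Lemma~\ref{lem:PartEulerChar}, applied to the faces that a partitioning with gluing actually covers.

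First, identify which faces are involved. The relative complex $(\Kappa,\{\emptyset\})$ consists of all nonempty faces of $\Kappa$. The gluing complex $\Gamma$ has faces $\emptyset$ and $v$, so $(\Gamma,\Gamma\cap\{\emptyset\})$ contains only the single face $v$. Hence a partitioning with gluing $\varphi$ at $\Gamma$ must satisfy two conditions. It covers every nonempty face other than $v$ exactly once by the intervals $[\varphi(\sigma),\sigma]$. It covers $v$ at most once. In addition, each $\varphi(\sigma)$ is a nonempty face.

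Second, compute the Euler characteristic. Let $V,E,T$ be the numbers of vertices, edges and triangles of $\Kappa$. Both a Möbius strip and a cylinder have Euler characteristic $0$, so $V-E+T=0$. (The hypothesis $V=T$ then gives $E=2T$, though only the Euler characteristic matters here.) Thus $\chi(\Kappa,\{\emptyset\})=V-E+T=0$, where we use the sign convention of the paper: a face of cardinality $i$ is weighted by $(-1)^{i+1}$.

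Third, compute the same signed sum interval by interval, under the assumption that $v$ is unused. Each interval $[\varphi(\sigma),\sigma]$ with $\varphi(\sigma)\subsetneq\sigma$ is a nontrivial Boolean interval, so its signed count is $0$. An interval with $\varphi(\sigma)=\sigma$ consists of a single triangle and contributes $+1$. The intervals are disjoint and cover exactly the nonempty faces other than $v$, so the signed sum of the covered faces equals the number $h\ge 0$ of triangles with $\varphi(\sigma)=\sigma$. On the other hand, this signed sum equals $\chi(\Kappa,\{\emptyset\})$ minus the contribution $+1$ of the vertex $v$, which is $0-1=-1$. The equation $h=-1$ is impossible, so $v$ must be used.

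There is no real obstacle. The only point needing care is the bookkeeping of the previous step: the empty face is excluded from the relative complex, and $v$ is the only face whose coverage is optional.
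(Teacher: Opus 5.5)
Your proof is correct, but it is organized differently from the paper's. The paper counts only edges. From $\chi(\Kappa)=0$ and the hypothesis that vertices and triangles are equinumerous it gets $E=2V$. It then observes that if $v$ is unused, exactly $V-1$ triangles have a vertex as $\varphi(\sigma)$, each covering two edges, while the one remaining triangle covers at most one edge. So at most $2V-1<E$ edges are covered.

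You instead take the full alternating sum of faces. In effect you apply Lemma~\ref{lem:PartEulerChar} to the relative complex $(\Kappa,\{\emptyset,v\})$. If $v$ were unused, $\varphi$ would partition this complex, and its Euler characteristic is $-1$, which forces the impossible $h_{3}=-1$.

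Your route makes plain that only $\chi(\Kappa)=0$ and purity in dimension $2$ matter. So, as you note, the hypothesis $V=T$ is superfluous for this conclusion. The paper's edge count as written uses $E=2V$, although it could also be adapted using $E=V+T$. Your argument is also the same mechanism the paper uses elsewhere to rule out $\varphi(\sigma)=\sigma$. The paper's edge count is more hands-on and pinpoints the deficit as a single missing edge.

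The paper additionally remarks, by citation, that $(\Kappa,\{\emptyset\})$ is partitionable at all. That is not part of the statement, and you rightly do not need it.
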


\begin{proof}
Note that $(\Kappa,\{\emptyset\})$ is partitionable by \cite[Propositions 4.3, 4.4]{Santamaria-Galvis:2022}.
Let $f_{1}=f_{3}$ be the number of vertices/triangles of $\Kappa$.
Since the Euler characteristic $\chi(\Kappa)=0$, the number of edges
is $f_{2}=2f_{1}$. But now if $\varphi$ is a partitioning with gluing,
then $[\varphi(\sigma),\sigma]$ covers $2$ edges if $\varphi(\sigma)$
is a vertex, and $1$ or $0$ edges otherwise. The result follows
by counting.
\end{proof}
\begin{lem}
\label{lem:PwGbdryS2}Let $\Kappa$ be a triangulation of the $2$-sphere,
and let $\Gamma$ be generated by a single edge $e$ of $\Kappa$.
If $\varphi$ is a partitioning with gluing at $\Gamma$ of $(\Kappa,\{\emptyset\})$,
then either 
\begin{enumerate}[label=(\alph*)]
\item  there is a facet $\sigma$ with $\varphi(\sigma)=\sigma$, or else
\item the edge $e$ is used, but its endpoint vertices are not.
\end{enumerate}
\end{lem}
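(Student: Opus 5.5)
The plan is a face count. The intervals $[\varphi(\sigma),\sigma]$ must cover every face of $(\Kappa,\{\emptyset\}\cup\Gamma)$ exactly once and every face of $\Gamma$ other than $\emptyset$ at most once. Since the sphere has very rigid face numbers, these conditions should pin down which faces of $\Gamma$ are used.

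Let $\Kappa$ have $V$ vertices. Since $\chi(\Kappa)=2$ and every edge lies in exactly two triangles, $\Kappa$ has $E=3V-6$ edges and $F=2V-4$ triangles.

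Let $\varphi$ be a partitioning with gluing at $\Gamma$. Every $\varphi(\sigma)$ is a face of $(\Kappa,\{\emptyset\})$, so it is nonempty. Let $h_{k}$ be the number of facets $\sigma$ with $|\varphi(\sigma)|=k$, for $k=1,2,3$.

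An interval $[\varphi(\sigma),\sigma]$ contains:
\begin{itemize}
\item one vertex, two edges and one triangle when $|\varphi(\sigma)|=1$;
\item one edge and one triangle when $|\varphi(\sigma)|=2$;
\item only the triangle when $|\varphi(\sigma)|=3$.
\end{itemize}
Every triangle is covered exactly once, since no triangle lies in $\Gamma$. Hence $h_{1}+h_{2}+h_{3}=F=2V-4$.

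Let $\alpha\in\{0,1,2\}$ be the number of unused endpoints of $e$. Let $\varepsilon\in\{0,1\}$ be $1$ if $e$ is unused and $0$ otherwise. All other vertices and edges are covered exactly once, and faces of $\Gamma$ are covered at most once. This gives two more equations:
\[
h_{1}=V-\alpha,\qquad 2h_{1}+h_{2}=E-\varepsilon=3V-6-\varepsilon .
\]

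Now suppose alternative (a) fails, so $h_{3}=0$. Then $h_{2}=2V-4-h_{1}$. Substituting into the edge equation gives $h_{1}+2V-4=3V-6-\varepsilon$, that is, $h_{1}=V-2-\varepsilon$. Comparing with $h_{1}=V-\alpha$ yields $\alpha=2+\varepsilon$.

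Since $\alpha\le 2$ and $\varepsilon\ge 0$, this forces $\alpha=2$ and $\varepsilon=0$. So $e$ is used and neither endpoint is used, which is alternative (b).

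This is essentially the argument of Lemma~\ref{lem:PwGcyl}, with the sphere's Euler characteristic $2$ in place of $0$. The only step needing care is the bookkeeping for the boundary faces of $\Gamma$: covered at most once rather than exactly once, and $\emptyset\in\Sigma$ never counted. I expect no real obstacle beyond setting up this count correctly.
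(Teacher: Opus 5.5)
Your proposal is correct and is essentially the paper's argument. Both are a face count in which $\chi(\Kappa)=2$ forces the triangles not matched to vertices to be exactly as numerous as the edges not covered by vertex intervals. This leaves no slack for $e$ to be unused or for an endpoint of $e$ to be used. Your explicit $\alpha,\varepsilon$ bookkeeping is just a careful version of the paper's ``at least $f_1-2$ vertices'' count, and only the relation $E-F=V-2$ is needed, not the separate values of $E$ and $F$.
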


\begin{proof}
Let $f_{1}$, $f_{2}$, and $f_{3}$ be the number of vertices, edges,
and triangles of $\Kappa$. Then we must have at least $f_{1}-2$
vertices in the image of $\varphi$. This leaves $(2,f_{2}-2f_{1}+4,f_{3}-f_{1}+2)$
vertices, edges, and triangles. Since the Euler characteristic $\chi(\Kappa)=2$,
we have $f_{2}-2f_{1}+4=f_{3}-f_{1}+2$. Thus, if we do not have a
facet $\sigma$ with $\varphi(\sigma)=\sigma$, then every edge (including
$e$) must be in the image of $\varphi$, and no additional vertices
may be in the image.
\end{proof}
\begin{rem}
The boundary of a tetrahedron is easily seen to admit both types of
partitioning with gluing at an edge from Lemma~\ref{lem:PwGbdryS2}.
\end{rem}

As a consequence of these examples, we obtain:
\begin{prop}
\label{prop:RelToAbs}Let $\Delta$ be a pure $2$-dimensional simplicial
complex, and $\Sigma$ be a subcomplex. Let $\Delta^{+}$ be obtained
by adding a disconnected triangle, gluing a copy of a 5-vertex Möbius
strip at each vertex of $\Sigma$, and gluing a copy of the tetrahedral
boundary to each edge of $\Sigma$. 

If $\Sigma$ is $0$-dimensional or $\left\{ \emptyset\right\} $,
or if $\Sigma$ is $1$-dimensional with $\chi(\Delta,\Sigma)=0$,
then $(\Delta,\Sigma)$ is partitionable if and only if $\Delta^{+}$
is partitionable.
\end{prop}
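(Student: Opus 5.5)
We decompose $\Delta^{+}$ as a union of pure $2$-dimensional pieces meeting in lower-dimensional faces, and apply Proposition~\ref{prop:PartitioningFromGluing} together with its converse (the Remark following it). The pieces are:
\begin{itemize}
\item $\Delta$ itself;
\item the disconnected triangle $T$, glued at $\{\emptyset\}$;
\item one Möbius strip $M_v$ for each vertex $v$ of $\Sigma$, glued at the face $v$ (and $\emptyset$);
\item one tetrahedral boundary $S_e$ for each edge $e$ of $\Sigma$, glued at $e$ and its endpoints.
\end{itemize}
The point is that the faces of $\Sigma$, which are exactly the faces shared between $\Delta$ and the added pieces, must each be used by exactly one added piece. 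Then the restriction of a partitioning of $\Delta^{+}$ to $\Delta$ uses no face of $\Sigma$, and is therefore a partitioning of $(\Delta,\Sigma)$. Here $\emptyset\in\Sigma$ whenever $\Sigma\neq\emptyset$. If $\Sigma=\emptyset$ the construction only adds $T$, and then $\emptyset$ would be used twice, so presumably the hypothesis intends $\Sigma\neq\emptyset$; we assume this.

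For the direction ``$(\Delta,\Sigma)$ partitionable $\Rightarrow$ $\Delta^{+}$ partitionable'', we take a partitioning $\varphi$ of $(\Delta,\Sigma)$ and assign each added piece a partitioning with gluing:
\begin{itemize}
\item $T$ uses $\emptyset$, with $\varphi(T)=\emptyset$.
\item Each $M_v$ uses $v$ but not $\emptyset$. We obtain this from a partitioning of $(M_v,\{\emptyset\})$ as in Lemma~\ref{lem:PwGcyl}, which necessarily uses $v$.
\item Each $S_e$ uses $e$ but neither its endpoints nor $\emptyset$. This is option (b) of Lemma~\ref{lem:PwGbdryS2}, which the Remark after it says is available for the tetrahedral boundary.
\end{itemize}
Every face of $\Sigma$ is then used exactly once, and Proposition~\ref{prop:PartitioningFromGluing} gives a partitioning of $\Delta^{+}$.

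For the converse, we start from a partitioning of $\Delta^{+}$ and restrict it to each piece. We must show that no face of $\Sigma$ is used by $\Delta$; the argument goes by dimension.
\begin{itemize}
\item \emph{The empty face.} $\emptyset$ is used by $T$ by Lemma~\ref{lem:PwGempty}. Hence no $M_v$ or $S_e$ uses $\emptyset$, so these pieces restrict to partitionings with gluing of $(M_v,\{\emptyset\})$ and $(S_e,\{\emptyset\})$.
\item \emph{Vertices.} By Lemma~\ref{lem:PwGcyl}, each $M_v$ uses $v$, so $\Delta$ does not use $v$. This finishes the case where $\Sigma$ is $0$-dimensional or equal to $\{\emptyset\}$.
\item \emph{Edges.} Suppose $\Sigma$ is $1$-dimensional. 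By Lemma~\ref{lem:PwGbdryS2}, each $S_e$ either uses $e$, or has a facet $\sigma$ with $\varphi(\sigma)=\sigma$. The second alternative needs a counting argument.
\end{itemize}

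The main obstacle is excluding option (a) of Lemma~\ref{lem:PwGbdryS2} for every tetrahedral boundary. Here we use $\chi(\Delta,\Sigma)=0$ and an Euler characteristic count.
\begin{itemize}
\item \emph{Euler characteristic of $\Delta^{+}$.} Adding $T$, the strips $M_v$ and the spheres $S_e$ contributes, relative to the glued faces, $\chi(T,\{\emptyset\})=-1$ for the triangle. For each Möbius strip it contributes $\chi(M_v)-\chi(\{v\})=0-1=-1$. For each tetrahedral boundary it contributes $\chi(S^2)-\chi(\text{edge})=2-1=+1$.
\item \emph{The count for $\Delta^{+}$.} Summing up, $\tilde\chi(\Delta^{+})=\tilde\chi(\Delta)-1-f_1(\Sigma)+f_2(\Sigma)$. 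With $\chi(\Delta,\Sigma)=\tilde\chi(\Delta)-\tilde\chi(\Sigma)$ and $\tilde\chi(\Sigma)=-1+f_1(\Sigma)-f_2(\Sigma)$, this gives $\tilde\chi(\Delta^{+})=\chi(\Delta,\Sigma)=0$. By Lemma~\ref{lem:PartEulerChar}, no facet of $\Delta^{+}$ satisfies $\varphi(\sigma)=\sigma$.
\item \emph{Conclusion.} So option (b) holds for every $S_e$: each edge $e$ of $\Sigma$ is used by $S_e$, hence not by $\Delta$. No face of $\Sigma$ is used by $\Delta$, and the restriction is a partitioning of $(\Delta,\Sigma)$.
\end{itemize}
The sign bookkeeping in the first bullet is the step most in need of a careful check.
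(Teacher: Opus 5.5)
Your argument follows the paper's proof in structure. You use the same decomposition of $\Delta^{+}$ and Proposition~\ref{prop:PartitioningFromGluing} in both directions. You use Lemmas~\ref{lem:PwGempty} and \ref{lem:PwGcyl} to force the empty face and the vertices of $\Sigma$ onto the added pieces, and $\tilde\chi(\Delta^{+})=0$ with Lemma~\ref{lem:PartEulerChar} to exclude option (a) of Lemma~\ref{lem:PwGbdryS2}.

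However, the Euler characteristic computation you flagged is wrong as written. The disconnected triangle contributes
$\chi(T,\{\emptyset\})=\tilde\chi(T)-\tilde\chi(\{\emptyset\})=0-(-1)=+1$, not $-1$: its new faces are $3$ vertices, $3$ edges and $1$ triangle. With your value, your formula gives
$\tilde\chi(\Delta^{+})=\tilde\chi(\Delta)-1-f_1(\Sigma)+f_2(\Sigma)=\tilde\chi(\Delta)-\tilde\chi(\Sigma)-2=\chi(\Delta,\Sigma)-2=-2$.
So the equality $\tilde\chi(\Delta^{+})=\chi(\Delta,\Sigma)$ does not follow from what you wrote; two slips happen to cancel. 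With the sign corrected, you get
$\tilde\chi(\Delta^{+})=\tilde\chi(\Delta)+1-f_1(\Sigma)+f_2(\Sigma)=\tilde\chi(\Delta)-\tilde\chi(\Sigma)=\chi(\Delta,\Sigma)=0$.
Your Möbius strip ($-1$) and tetrahedral boundary ($+1$) contributions are correct, and after this fix the rest of your argument goes through.

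The paper avoids this bookkeeping by assigning each face of $\Sigma$ to the piece that uses it. Then $\Delta^{+}$ is the disjoint union of $(\Delta,\Sigma)$, the triangle $(T,\emptyset)$, copies of $(M,\{\emptyset\})$, and copies of $(S^{2},S^{0})$. Each of these has Euler characteristic $0$, so the vanishing of $\tilde\chi(\Delta^{+})$ is immediate and needs no sign tracking.
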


\begin{proof}
If $\Sigma$ is $0$-dimensional or $\{\emptyset\}$, then the result
is immediate from Proposition~\ref{prop:PartitioningFromGluing},
the induced partitionings with gluings, Lemmas~\ref{lem:PwGempty}
and \ref{lem:PwGcyl}, and \cite[Proposition 4.3]{Santamaria-Galvis:2022}.
The empty set is used by the disconnected triangle, and each vertex
is used by its Möbius strip.

If $\Sigma$ is $1$-dimensional, then for Lemma~\ref{lem:PwGbdryS2}
to say each edge is used by its tetrahedral boundary, we need also
that $\tilde{\chi}(\Delta^{+})=0$. (See Lemma~\ref{lem:PartEulerChar}.)
Now $\Delta^{+}$ is the disjoint union of the relative complex $(\Delta,\Sigma)$,
the disconnected triangle, copies of a relative Möbius band $(M,\{\emptyset\})$,
and copies of a relative sphere $(S^{2},S^{0})$. Since each (relative)
complex in the union has $0$ Euler characteristic, so does $\Delta^{+}$.
\end{proof}
\begin{rem}
\label{rem:SwitchingComplex}The \emph{switching complex} desired
by Kleinschmidt and Onn \cite{Kleinschmidt/Onn:1995,Kleinschmidt/Onn:1996}
may be formulated in this language: it is a simplicial complex $\Kappa$,
with gluing subcomplex $\Gamma=\{\emptyset,v_{1},v_{2},v_{3}\}$ such
that any partitioning with gluing at $\Gamma$ of $(\Kappa,\{\emptyset\})$
uses either all or none of $v_{1},v_{2},v_{3}$, and so that both
types of partitionings with gluing exist.

The variable gadgets of this paper have a similar rigidity with respect
to partitionings with gluing, so long as they are embedded in a simplicial
complex with trivial Euler characteristic. 
\end{rem}

\section{\label{sec:MainProofs}Proof of the main theorems}

Throughout this section, let $(\Delta,\Sigma)$ be as in Theorem~\ref{thm1-IN-3ReduceRel},
and $\Delta^{+}$ be as in Corollary~\ref{cor:1-IN-3Reduce}.

In Section~\ref{subsec:RelEulerChar}, we will show that the relative
Euler characteristic $\chi(\Delta,\Sigma)$ is zero. In Section~\ref{subsec:PartitioningGadgets},
we will show that this highly constrains partitionings with gluing
of our gadgets: in particular, an appropriate partitioning with gluing
of the variable gadget $(V_{x},R_{x})$ will use all positive or all
negated variable vertices, corresponding to the two truth values of
$x$. In Section~\ref{subsec:AssembledProof} we will assemble these
ingredients into a proof of Theorem~\ref{thm1-IN-3ReduceRel}.

\subsection{\label{subsec:RelEulerChar}Relative Euler characteristic}

We begin by computing the relative Euler characteristic of $(\Delta,\Sigma)$,
which will be useful in light of Lemma~\ref{lem:PartEulerChar}. 

Since $V^{0}$ is obtained by gluing together two spheres and a disc
along contractible intersections, we have $\tilde{\chi}(V^{0})=2$.
Since $C^{+}$ and $C^{\neg}$ are contractible, $\bdry C^{+}$ is
an $S^{1}$, and $\bdry C^{\neg}$ is a bouquet of two $S^{1}$'s,
we have $\chi(C^{+},\bdry C^{+})=1$ and $\chi(C^{\neg},\bdry C^{\neg})=2$. 

As $V_{x}$ is homotopy equivalent to a bouquet of $m_{x}-1$ copies
of $V^{0}$, each variable gadget $(V_{x},R_{x})$ contributes $(2m_{x}-2)-(m_{x}-2)=m_{x}$
to the relative Euler characteristic. Gluing $(C^{+},\bdry C^{+})$
along three vertices contributes $1-3=-2$ to the relative Euler characteristic,
and gluing $(C^{\neg},\bdry C^{\neg})$ along three vertices contributes
$2-3=-1$. Here, we have already counted the hollow vertices of $C^{+}$
and $C^{\neg}$ in the $V_{x}$ gadgets. Our Euler characteristic
of the relative complex is 
\[
\chi(\Delta,\Sigma)=\left(\sum m_{x}\right)+\sum_{\text{clauses}}(-2-1)=3m-3m=0.
\]

\subsection{\label{subsec:PartitioningGadgets}Partitioning the gadgets}

The essential ingredient in the proof is the $V^{0}$ relative simplicial
complex.

\begin{figure}
% The two states of a copy gadget.  Arrow from R_F into F; band = faces of the interval.
\begin{tikzpicture}[scale=1.25]
  % ---- state {1,3}: facet h i i+1 owns spoke h i  ("clockwise"; x true)
  \copycoords{a}{0}
  \copyedges{a}
  \vcov{a-1}{a-h}{a-2}
  \ecov{a-h}{a-2}{a-3}
  \vcov{a-3}{a-h}{a-4}
  \ecov{a-h}{a-4}{a-1}
  \node[vA] at (a-h) {};
  \node[v,label={[glab]below left:1}] at (a-1) {};
  \node[v,label={[glab]above left:2}] at (a-2) {};
  \node[v,label={[glab]above right:3}] at (a-3) {};
  \node[v,label={[glab]below right:4}] at (a-4) {};
  \node[glab,text=relred!80!black] at ($(a-h)+(250:0.22)$) {$r$};
  % ---- state {2,4}: facet h i i+1 owns spoke h(i+1)  ("counterclockwise"; x false)
  \copycoords{b}{3.2}
  \copyedges{b}
  \vcov{b-2}{b-1}{b-h}
  \ecov{b-3}{b-h}{b-2}
  \vcov{b-4}{b-3}{b-h}
  \ecov{b-1}{b-h}{b-4}
  \node[vA] at (b-h) {};
  \node[v,label={[glab]below left:1}] at (b-1) {};
  \node[v,label={[glab]above left:2}] at (b-2) {};
  \node[v,label={[glab]above right:3}] at (b-3) {};
  \node[v,label={[glab]below right:4}] at (b-4) {};
  \node[glab,text=relred!80!black] at ($(b-h)+(290:0.22)$) {$r$};
\end{tikzpicture}

\caption{\label{fig:V0partitionings}Partitioning with gluing of $V^{0}$ using
$1,3$ (left) and using $2,4$ (right). The arrows are drawn from
$\varphi(\sigma)$ to $\sigma$; shaded bands indicate the edges in
$[\varphi(\sigma),\sigma]$.}

\end{figure}

\begin{lem}
\label{lem:PwG-V0}If $\varphi$ is a partitioning of $V^{0}$ with
gluing at $\Gamma=\left\{ \emptyset,1,2,3,4\right\} $ of $(V^{0},\{\emptyset,r\})$,
then either
\begin{enumerate}[label=(\alph*)]
\item  there is a facet $\sigma$ with $\varphi(\sigma)=\sigma$, or 
\item $\varphi$ uses vertices $1$ and $3$ and does not use vertices $2$
or $4$, or else
\item $\varphi$ uses vertices $2$ and $4$ and does not use vertices $1$
or $3$.
\end{enumerate}
\end{lem}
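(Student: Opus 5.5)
The plan is to split $V^{0}$ into three pieces and use the gluing language of Section~\ref{sec:PartitionWGluing}. The pieces are the two tetrahedral boundaries $T_{23}$ and $T_{14}$, attached along the edges $23$ and $14$, and the square wheel $W$ with triangles $12r,23r,34r,14r$. Each facet lies in exactly one piece, and the pieces meet only in the edges $23$ and $14$ together with their endpoints. So, by the Remark following Proposition~\ref{prop:PartitioningFromGluing}, $\varphi$ restricts to three maps:
\begin{itemize}
\item a partitioning with gluing of $(T_{23},\{\emptyset\})$ at the edge $23$;
\item a partitioning with gluing of $(T_{14},\{\emptyset\})$ at the edge $14$;
\item a partitioning with gluing of $(W,\{\emptyset,r\})$ at $\Gamma$ together with the edges $23,14$.
\end{itemize}
Every face of $V^{0}$ that is not in $\{\emptyset,r\}$ and not among the vertices $1,2,3,4$ is used by exactly one of these restrictions. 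The vertices $1,2,3,4$ are used at most once in total.

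First I apply Lemma~\ref{lem:PwGbdryS2} to each tetrahedral boundary. If either one has a facet $\sigma$ with $\varphi(\sigma)=\sigma$, we are in case (a). Otherwise, the edge $23$ is used by $T_{23}$ and $14$ is used by $T_{14}$, while none of the vertices $1,2,3,4$ is used by either tetrahedron. Consequently, whether a vertex $1,2,3,4$ is used is decided entirely inside $W$. Moreover, $W$ must cover exactly the following faces:
\begin{itemize}
\item the six edges $r1,r2,r3,r4,12,34$;
\item its four triangles.
\end{itemize}
It must not cover the edges $23$ or $14$.

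Next comes the counting inside $W$. Since $r\in\Sigma$, no $\varphi(\sigma)$ equals $r$ or $\emptyset$. Hence each triangle's interval is of one of three kinds:
\begin{itemize}
\item $[v,\sigma]$ for a vertex $v\neq r$, covering two edges;
\item $[e,\sigma]$ for an edge $e$, covering one edge;
\item $[\sigma,\sigma]$, covering none.
\end{itemize}
The last kind is excluded unless we are in case (a). For the triangle $23r$, choosing $\varphi$ to be vertex $2$, vertex $3$, or the edge $23$ would cover the edge $23$, which is already used by $T_{23}$. So $\varphi(23r)$ is $r2$ or $r3$, and this triangle covers exactly one edge. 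The same argument shows that $14r$ covers exactly one edge, either $r1$ or $r4$. The six edges therefore force $12r$ and $34r$ to each cover two edges, so $\varphi(12r)\in\{1,2\}$ and $\varphi(34r)\in\{3,4\}$.

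Then I chase the spokes, each of which must be covered exactly once.
\begin{itemize}
\item If $\varphi(12r)=1$, this covers $r1$ and $12$. Then $r2$ can only come from $23r$, forcing $\varphi(23r)=r2$. Now $r3$ must come from $34r$, forcing $\varphi(34r)=3$, and then $\varphi(14r)=r4$. The used vertices are exactly $1$ and $3$, which is case (b).
\item If $\varphi(12r)=2$, the mirror chase gives $\varphi(14r)=r1$, $\varphi(34r)=4$ and $\varphi(23r)=r3$. The used vertices are exactly $2$ and $4$, which is case (c).
\end{itemize}

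I expect the main obstacle to be the bookkeeping in the first step. One must justify that the restriction to each tetrahedron is a genuine partitioning with gluing of $(T,\{\emptyset\})$ at its attaching edge: the tetrahedron's intervals avoid $\emptyset$ because $\emptyset\in\Sigma$. After that, the key point is that the edges $23$ and $14$, once used by the tetrahedra, block the vertex choices in the adjacent wheel triangles. The remaining steps are a short finite chase.
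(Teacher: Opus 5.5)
Your proof is correct and follows the paper's route. Lemma~\ref{lem:PwGbdryS2} makes the tetrahedral boundaries use the edges $23$ and $14$ but not their endpoints. An edge count then sends two wheel triangles to vertices and two to spokes, and your explicit blocking-and-chase argument spells out the ``inspection'' that the paper leaves to the reader (cf.\ Figure~\ref{fig:V0partitionings}).
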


\begin{proof}
By Lemma~\ref{lem:PwGbdryS2}, if there is no facet $\sigma$ with
$\varphi(\sigma)=\sigma$, then the restrictions of $\varphi$ to
the two tetrahedral boundaries use the edges $23$ and $14$. By counting
edges, $\varphi$ must send two facets to vertices and two to edges.
The result now follows by inspection. See Figure~\ref{fig:V0partitionings}.
\end{proof}
Since $V_{x}$ is obtained by gluing together many copies of $V^{0}$,
we obtain immediately the following.
\begin{cor}
\label{cor:PwG-Vx}If $\varphi$ is a partitioning of $(V_{x},R_{x})$
with gluing at $\Gamma$ the subcomplex generated by variable vertices,
then either
\begin{enumerate}[label=(\alph*)]
\item  there is a facet $\sigma$ with $\varphi(\sigma)=\sigma$, or 
\item $\varphi$ uses all positive variable vertices and no negated variable
vertices, or else
\item $\varphi$ uses all negated variable vertices and no positive variable
vertices.
\end{enumerate}
\end{cor}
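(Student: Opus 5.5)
The plan is to restrict $\varphi$ to each copy of $V^{0}$ and apply Lemma~\ref{lem:PwG-V0} copy by copy. The relative complex $(V_{x},R_{x})$ is the union of the $m_{x}-1$ copies $V^{0}_{1},\dots,V^{0}_{m_{x}-1}$. Each facet lies in exactly one copy, and consecutive copies share only the vertex $4_{i}=1_{i+1}$, which is a variable vertex. Assume no facet $\sigma$ has $\varphi(\sigma)=\sigma$; otherwise we are in case (a).

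\emph{Step 1: restriction to one copy.} Let $\varphi_{i}$ be the restriction of $\varphi$ to the facets of the $i$th copy. I will check that $\varphi_{i}$ is a partitioning of $V^{0}_{i}$ with gluing at $\{\emptyset,1,2,3,4\}$ relative to $\{\emptyset,r\}$. The faces of $(V^{0}_{i},\{\emptyset,r\}\cup\Gamma_{i})$ are exactly the faces of $V_{x}$ that meet copy $i$ in something other than $\emptyset$, $r_{i}$, or a variable vertex. Every such face is an edge or triangle of copy $i$, or a non-variable vertex of copy $i$. Each of these lies only in facets of copy $i$, so it is covered exactly once by $\varphi_{i}$. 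The faces of $\Gamma_{i}$ are covered at most once because $\varphi$ is a partitioning with gluing. Since $\varphi_{i}$ has no facet with $\varphi_{i}(\sigma)=\sigma$, Lemma~\ref{lem:PwG-V0} says that copy $i$ either uses $\{1_{i},3_{i}\}$ and not $\{2_{i},4_{i}\}$, or uses $\{2_{i},4_{i}\}$ and not $\{1_{i},3_{i}\}$.

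\emph{Step 2: propagation along the chain.} The shared vertex $v=4_{i}=1_{i+1}$ is a single face of $V_{x}$, so it is used at most once overall. Suppose copy $i$ is in the $\{1,3\}$ state. Then copy $i$ does not use $4_{i}$. Copy $i+1$ must then avoid being in the $\{2,4\}$ state, since that state does not use $1_{i+1}$. I need to show copy $i+1$ is also in the $\{1,3\}$ state. If instead copy $i+1$ were in the $\{2,4\}$ state, then $v$ would be used by neither copy. That alone is not forbidden, because gluing faces may be unused. So the real constraint runs the other way: if copy $i$ is in the $\{2,4\}$ state, it uses $v$, and then copy $i+1$ cannot also use $v$. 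This forces copy $i+1$ into the $\{2,4\}$ state.

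To get the converse direction I will use counting. The key count is that every copy, in either state, uses exactly two variable vertices. A mixed chain would therefore have to consist of some $\{1,3\}$ copies followed by some $\{2,4\}$ copies, with the junction vertex used by neither. Such a configuration contradicts neither Lemma~\ref{lem:PwG-V0} nor disjointness.

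This is the step I expect to be the main obstacle. The statement of Corollary~\ref{cor:PwG-Vx} as given seems to permit exactly this mixed junction unless further information rules it out. My first attempt will be a closer inspection of Figure~\ref{fig:V0partitionings}, to see whether the covering of the spoke $r4$ or $r1$ forces more than the lemma records. Failing that, I would fall back on a global count. By Lemma~\ref{lem:PartEulerChar} and $\chi(\Delta,\Sigma)=0$, no facet of $\Delta$ is its own restriction face. The variable vertices are then in bijection with hollow vertices of the clause gadgets, and I would show that each variable vertex must be used by exactly one side. That would exclude an unused junction and complete the propagation to cases (b) and (c).
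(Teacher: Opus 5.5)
\paragraph{The gap.} You assert that the shared vertex $4_i=1_{i+1}$ is a variable vertex, and that misreading is what creates your obstacle. By definition, the positive variable vertices of $V(k)$ are the vertices $3$ of all copies together with the \emph{unglued} vertex $1$ of the first copy. The negated ones are the vertices $2$ of all copies together with the \emph{unglued} vertex $4$ of the last copy. The chain vertices in Figure~\ref{fig:VarGadget} carry no $x$ or $\neg x$ label.

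\paragraph{The missing step.} A junction vertex therefore lies neither in $\Gamma$ nor in $R_x$. It is a face of $(V_x,R_x\cup\Gamma)$, so condition (2) of a partitioning with gluing forces it to lie in exactly one interval $[\varphi(\sigma),\sigma]$. It lies only in facets of copies $i$ and $i+1$, so exactly one of those two restrictions uses it. This completes your propagation in both directions:
\begin{itemize}
\item If copy $i$ is in the $\{1,3\}$ state, it does not use $4_i$. So copy $i+1$ must use $1_{i+1}$, and by Lemma~\ref{lem:PwG-V0} copy $i+1$ is in the $\{1,3\}$ state.
\item If copy $i$ is in the $\{2,4\}$ state, it uses $4_i$. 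So copy $i+1$ does not use $1_{i+1}$ and is in the $\{2,4\}$ state.
\end{itemize}
Hence all copies share one state. Reading off $1_1$, the $3_i$, the $2_i$, and $4_{k-1}$ then gives (b) or (c). This is the paper's one-line argument. Your Step 1 survives unchanged: each restriction uses a junction vertex at most once, because $\varphi$ uses it exactly once overall.

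\paragraph{On your counterexample and fallback.} Under your reading, the mixed-junction configuration really is a valid partitioning with gluing satisfying none of (a), (b), (c). That should have signalled a misreading of the construction; it does not mean more information must be extracted from Figure~\ref{fig:V0partitionings}.

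The proposed global fallback would not repair the argument. The corollary concerns an arbitrary partitioning with gluing of $(V_x,R_x)$ on its own, so invoking $\chi(\Delta,\Sigma)=0$ or the clause gadgets changes the hypothesis. Moreover, junction vertices are not glued to any hollow vertex, so the bijection between variable vertices and hollow vertices says nothing about them.
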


\begin{proof}
If (a) does not occur, then by Lemma~\ref{lem:PwG-V0} the vertices
shared by adjacent copies of $V^{0}$ are either all used by the predecessor
copy of $V^{0}$ or all by the successor copy.
\end{proof}
We now consider the clause gadgets. Let $\Tau$ be the simplicial
complex with vertices $1,2,3,\ell$ and triangles $12\ell,23\ell,13\ell$,
and $\Sigma_{\Tau}$ the subcomplex generated by $12$ and $13$.
Thus, the relative complexes $(C^{+},\bdry C^{+})$ respectively $(C^{\neg},\bdry C^{\neg})$
are formed by gluing together three copies of $(\Tau,\Sigma_{\Tau})$
around a triangle respectively around three triangles sharing a common
edge.
\begin{lem}
\label{lem:PwGtau}If $\varphi$ is a partitioning of $(\Tau,\Sigma_{\Tau})$
with gluing at $\Gamma$ the subcomplex generated by $\ell$ and $23$,
then either
\begin{enumerate}[label=(\alph*)]
\item  there is a facet $\sigma$ with $\varphi(\sigma)=\sigma$, or
\item $\varphi$ uses both $\ell$ and $23$, or else
\item $\varphi$ uses neither $\ell$ nor $23$.
\end{enumerate}
\end{lem}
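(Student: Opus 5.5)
The plan is to enumerate the faces directly and count; no earlier result beyond the definitions is needed. First I list the faces of $(\Tau,\Sigma_{\Tau})$. Since $\Sigma_{\Tau}$ is generated by $12$ and $13$, it consists of $\emptyset,1,2,3,12,13$. The relative faces are therefore the vertex $\ell$, the edges $23,1\ell,2\ell,3\ell$, and the three triangles $12\ell,23\ell,13\ell$. The gluing complex $\Gamma$ contributes exactly the relative faces $\ell$ and $23$. So the faces that \emph{must} be covered exactly once are the three edges $1\ell,2\ell,3\ell$ and the three triangles, while $\ell$ and $23$ may each be covered at most once.

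Next I would pin down the possible values of $\varphi(\sigma)$. By definition $\varphi(\sigma)$ is a face of $(\Tau,\Sigma_{\Tau})$, so it cannot be $\emptyset$ or one of the vertices $1,2,3$. Assume (a) fails, so no triangle is sent to itself. Then each $\varphi(\sigma)$ is either the vertex $\ell$ or an edge of $\sigma$ not lying in $\Sigma_{\Tau}$. If $\varphi(\sigma)=\ell$, the interval $[\ell,\sigma]$ covers $\ell$, the two edges of $\sigma$ through $\ell$, and $\sigma$. If $\varphi(\sigma)$ is an edge, the interval covers exactly that edge and $\sigma$.

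The count is then short. By disjointness, at most one triangle can have $\varphi(\sigma)=\ell$.

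\emph{Case 1: some triangle has $\varphi(\sigma)=\ell$.} That triangle covers two of the three edges $1\ell,2\ell,3\ell$. The other two triangles each cover exactly one edge, and one of these must be the remaining $\ell$-edge. The other edge must then be an allowed edge not yet covered, and the only candidate is $23$. So $\ell$ and $23$ are both used, which is case (b).

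\emph{Case 2: no triangle has $\varphi(\sigma)=\ell$.} Then $\ell$ is unused. The three triangles cover exactly three edges, and these must be exactly the three mandatory edges $1\ell,2\ell,3\ell$. Hence $23$ is also unused, which is case (c).

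I expect no real obstacle here. The only point needing care is the bookkeeping of which faces lie in $\Sigma_{\Tau}$, in $\Gamma$, or in neither. In particular, $\varphi(\sigma)$ may not be a vertex of $\Sigma_{\Tau}$, and this is what forces each interval to contain at most two edges.
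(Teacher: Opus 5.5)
Your proposal is correct and is essentially the paper's own edge-counting argument. The paper also splits on whether $\ell$ is used, which is equivalent to some triangle having $\varphi(\sigma)=\ell$, and counts the three edges $1\ell,2\ell,3\ell$ against the edges the intervals can cover; you make the bookkeeping of relative and gluing faces explicit, which the paper leaves to the reader.
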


\begin{proof}
If there is no facet with $\varphi(\sigma)=\sigma$, then count edges!
If $\ell$ is not used, then the three edges $1\ell$, $2\ell$, $3\ell$
are covered by three facets. If $\ell$ is used, then $[\ell,\sigma]$
covers two edges, and we need two additional edges to cover the remaining
two facets.
\end{proof}
\begin{lem}
\label{lem:PwGposclause}If $\varphi$ is a partitioning of $(C^{+},\bdry C^{+})$
with gluing at $\{\emptyset,\ell_{1},\ell_{2},\ell_{3}\}$, then either 
\begin{enumerate}[label=(\alph*)]
\item  there is a facet $\sigma$ with $\varphi(\sigma)=\sigma$, or
\item exactly two of $\ell_{1},\ell_{2},\ell_{3}$ are used by $\varphi$.
\end{enumerate}
\end{lem}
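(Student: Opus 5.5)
The plan is to decompose $(C^{+},\bdry C^{+})$ into four pieces and restrict $\varphi$ to each, in the spirit of the Remark after Proposition~\ref{prop:PartitioningFromGluing}. Label $C^{+}$ as in Figure~\ref{fig:ClausePosGadget}: there is a center triangle $pqr$, and for each $i$ a subdivided triangle with outer vertex $w_{i}$ and interior (hollow) vertex $\ell_{i}$, attached to one edge of $pqr$. The four pieces are:
\begin{itemize}
\item[$\bullet$] three copies of $(\Tau,\Sigma_{\Tau})$, one for each $i$, with $\ell=\ell_{i}$, $1=w_{i}$, and $23$ the corresponding edge of $pqr$;
\item[$\bullet$] the single triangle $pqr$, relative to its vertices and the empty set (all of which lie in $\bdry C^{+}$).
\end{itemize}
Each facet lies in exactly one piece. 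The $i$th copy of $\Tau$ meets the rest of the complex only in $\ell_{i}$ (which meets the outside world) and in its edge of $pqr$, so $\varphi$ restricts to a partitioning with gluing of that copy at the subcomplex generated by $\ell_{i}$ and $23$. Likewise $\varphi$ restricts to a partitioning with gluing of the center triangle at its three edges. Moreover, each edge of $pqr$ is a non-relative face, so it is used by exactly one of the two pieces containing it.

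Assume (a) fails. We handle the two kinds of piece in turn.
\begin{itemize}
\item[$\bullet$] \emph{Center triangle.} Since $\varphi(pqr)\neq pqr$, and $\varphi(pqr)$ must be a face of the relative complex (so not a vertex and not $\emptyset$), $\varphi(pqr)$ is an edge. Thus the center triangle uses exactly one of its three edges.
\item[$\bullet$] \emph{Copies of $\Tau$.} Each of the other two edges of $pqr$ must therefore be used by its adjacent copy of $\Tau$, and the edge taken by the center is not used by its copy. By Lemma~\ref{lem:PwGtau}, with (a) excluded, a copy of $\Tau$ uses its edge $23$ if and only if it uses its vertex $\ell_{i}$.
\end{itemize}
Putting these together, exactly two of $\ell_{1},\ell_{2},\ell_{3}$ are used, which is (b). Finally, $\ell_{i}$ lies only in the $i$th copy of $\Tau$ among these pieces, so "used by $\varphi$" agrees with "used by the restriction to that copy."

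The main point needing care is the bookkeeping of the restriction. We must check that:
\begin{itemize}
\item[$\bullet$] the intersection of each $\Tau$ copy with the rest of $C^{+}$ is exactly the edge and $\ell_{i}$, modulo relative faces;
\item[$\bullet$] the relative faces of the $\Tau$ copy are exactly $\Sigma_{\Tau}$, namely the edges $w_{i}p$ and $w_{i}r$ and their vertices, so that Lemma~\ref{lem:PwGtau} applies verbatim;
\item[$\bullet$] $\emptyset$ is relative, so it plays no role.
\end{itemize}
Once this identification is verified, the argument is the short edge count above.
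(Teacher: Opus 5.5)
Your proposal is correct and follows the paper's proof. In both, the center triangle $pqr$ must be matched to one of its edges, and three applications of Lemma~\ref{lem:PwGtau} then force exactly the two hollow vertices not adjacent to that edge to be used. Your extra bookkeeping for restricting $\varphi$ to the three copies of $(\Tau,\Sigma_{\Tau})$ and the center triangle is accurate, and it spells out what the paper leaves implicit.
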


\begin{proof}
Assume that there is no facet with $\varphi(\sigma)=\sigma$. The
partitioning $\varphi$ matches the middle triangle to one of its
edges (as its vertices are relative). By three applications of Lemma~\ref{lem:PwGtau},
$\varphi$ does not use $\ell_{i}$ in the corresponding copy of $\Tau$,
but does use $\ell_{i}$ in the other two copies.
\end{proof}
The corresponding result for $C^{\neg}$ is proved in exactly the
same way.
\begin{lem}
\label{lem:PwGnegclause}If $\varphi$ is a partitioning of $(C^{\neg},\bdry C^{\neg})$
with gluing at $\{\emptyset,\neg\ell_{1},\neg\ell_{2},\neg\ell_{3}\}$,
then either 
\begin{enumerate}[label=(\alph*)]
\item  there is a facet $\sigma$ with $\varphi(\sigma)=\sigma$, or
\item exactly one of $\neg\ell_{1},\neg\ell_{2},\neg\ell_{3}$ is used by
$\varphi$.
\end{enumerate}
\end{lem}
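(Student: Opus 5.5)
The plan is to mirror the proof of Lemma~\ref{lem:PwGposclause}: split $(C^{\neg},\bdry C^{\neg})$ into its three ``pages'' and its three copies of $(\Tau,\Sigma_{\Tau})$, and let the spine edge $st$ force the count. Throughout, assume there is no facet $\sigma$ with $\varphi(\sigma)=\sigma$; otherwise conclusion (a) holds.

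First I will fix the combinatorics. For $i=1,2,3$, the $i$th third of $C^{\neg}$ consists of two parts.
\begin{itemize}
\item The page triangle $sta_{i}$.
\item A copy of $\Tau$ on the vertices $a_{i},s,w_{i}$ and the hollow vertex $u_{i}=\neg\ell_{i}$, with triangles $u_{i}a_{i}s$, $u_{i}sw_{i}$ and $u_{i}w_{i}a_{i}$.
\end{itemize}
In this copy of $\Tau$, the edge playing the role of $23$ is $a_{i}s$, which is shared with the page. The edges $a_{i}w_{i}$ and $w_{i}s$ are relative, so they play the role of $\Sigma_{\Tau}$. In the page, all vertices and the edge $ta_{i}$ are relative. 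Its only non-relative proper faces are therefore the spine $st$, common to all three pages, and the edge $a_{i}s$.

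Next I will use the pages to control which edges are covered. Since $\varphi(sta_{i})\neq sta_{i}$ and $\varphi(sta_{i})$ must be a face of $(C^{\neg},\bdry C^{\neg})$, it is a non-relative proper face of $sta_{i}$. So $\varphi(sta_{i})$ is one of the edges $st$ or $a_{i}s$, and the interval $[\varphi(sta_{i}),sta_{i}]$ covers just that edge together with the triangle. The spine $st$ lies only in the three page triangles, and it must be covered exactly once. Hence exactly one page, say the $j$th, has $\varphi(sta_{j})=st$, and the other two pages have $\varphi(sta_{i})=a_{i}s$.

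Finally I will transfer this to the copies of $\Tau$ using Lemma~\ref{lem:PwGtau}. The restriction of $\varphi$ to the $i$th copy of $(\Tau,\Sigma_{\Tau})$ is a partitioning with gluing at the subcomplex generated by $u_{i}$ and $a_{i}s$.
\begin{itemize}
\item For $i\neq j$, the edge $a_{i}s$ is already covered by the page, so this copy does not use $a_{i}s$. By Lemma~\ref{lem:PwGtau}(c) it does not use $u_{i}$ either.
\item For $i=j$, the edge $a_{j}s$ is not covered by its page, and it lies in no other facet outside this copy of $\Tau$. So this copy must use $a_{j}s$, and by Lemma~\ref{lem:PwGtau}(b) it also uses $u_{j}$.
\end{itemize}
The vertex $u_{i}$ lies only in the $i$th copy of $\Tau$, so $\varphi$ uses $\neg\ell_{i}$ exactly when this copy does. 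Therefore exactly one of $\neg\ell_{1},\neg\ell_{2},\neg\ell_{3}$ is used, which is conclusion (b).

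The only real obstacle is bookkeeping: identifying correctly which faces of each page are relative, and checking that $st$ and the edges $a_{i}s$ lie in no facets other than those named. Once that is settled, the argument is the same edge-matching as in Lemma~\ref{lem:PwGposclause}. The difference is that the central triangle, which there leaves one copy of $\Tau$ unused, is replaced by three pages around the spine, which leave exactly one copy used.
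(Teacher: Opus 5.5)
Your proposal is correct and takes the paper's approach. The paper proves this lemma ``in exactly the same way'' as Lemma~\ref{lem:PwGposclause}, and your argument is that adaptation with the bookkeeping written out: each page is matched to $st$ or $a_is$, the spine forces exactly one page to take $st$, and Lemma~\ref{lem:PwGtau} then transfers this to the three copies of $\Tau$.
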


All of the possibilities for partitionings with gluing enumerated
in the lemmas and corollary of this section can indeed be constructed.
\begin{prop}
There are partitionings with gluing satisfying each enumerated type
in Corollary~\ref{cor:PwG-Vx} and Lemmas~\ref{lem:PwG-V0}, \ref{lem:PwGposclause},
and \ref{lem:PwGnegclause}.
\end{prop}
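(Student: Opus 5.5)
This is an existence statement, so the plan is to write down explicit partitionings with gluing for each enumerated type. Each is built from small pieces and then combined using Proposition~\ref{prop:PartitioningFromGluing}.

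\emph{Type (a).} This type is trivial in every case. Set $\varphi(\sigma)=\sigma$ on one facet, and assign the remaining facets so that all non-gluing faces are still covered. For instance, take any of the type (b)/(c) partitionings constructed below. In one of its intervals $[\varphi(\sigma),\sigma]$ whose restriction face is a used gluing face or a covered edge, the faces strictly below $\sigma$ would need redistribution, so this is not automatic. A cleaner route is to use the tetrahedral boundaries in $V^0$, which by the Remark after Lemma~\ref{lem:PwGbdryS2} admit a partitioning with gluing of type (a). For the clause gadgets I would instead exhibit a direct small example: take $\varphi=\mathrm{id}$ on a facet of $\Tau$ and cover the remaining faces greedily, checking by hand. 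I regard this as a minor point and would verify it by a short table.

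\emph{Variable gadgets.} For Lemma~\ref{lem:PwG-V0}(b),(c), Figure~\ref{fig:V0partitionings} gives the four square triangles explicitly.
\begin{itemize}
\item Type (b): $12r\mapsto 1$, $23r\mapsto r2$ is not allowed since $r$ is relative, so instead $23r\mapsto 2r$ as an edge, $34r\mapsto 3$, $14r\mapsto 4r$.
\item Type (c): the rotated assignment.
\end{itemize}
I then check that these intervals cover exactly the non-relative faces $1r,2r,3r,4r,12,34$ and the triangles, together with the used vertices, leaving $23$ and $14$ uncovered. Each tetrahedral boundary then receives the partitioning from the Remark of type (b) of Lemma~\ref{lem:PwGbdryS2}, which uses its spine edge but not the endpoints. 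For $V_x$ I chain copies: all copies in state (b) use vertex $1$ of each copy, hence each shared vertex is used exactly once (by its successor) and the end vertices are handled correctly. Symmetrically, state (c) works for all copies. Proposition~\ref{prop:PartitioningFromGluing} assembles these.

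\emph{Clause gadgets.} For $\Tau$, I first write down the two local states:
\begin{itemize}
\item using both $\ell$ and $23$: $12\ell\mapsto\ell$ (covering $\ell,1\ell,2\ell$), $23\ell\mapsto 23$, $13\ell\mapsto 3\ell$;
\item using neither: $12\ell\mapsto1\ell$, $23\ell\mapsto 2\ell$, $13\ell\mapsto3\ell$. This also needs $23$ covered from the other side.
\end{itemize}
In $C^+$, the centre triangle $pqr\mapsto$ one of its edges, say $pq$. The copy of $\Tau$ on $pq$ is in state ``neither'' and the other two are in state ``both'' (they use $qr,rp$), so two hollow vertices are used. For $C^{\neg}$, each page triangle $sta_i$ must be matched to an edge. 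The non-relative edges there are $st$ and the $sa_i$ (since $ta_i$ is relative). One page takes $st$ and its copy of $\Tau$ uses $sa_i$ (state ``both''); the other two pages take $sa_i$, with their $\Tau$ in state ``neither''. Exactly one hollow vertex is then used, as required. The remaining check is that $s,t$ are relative, which the red colouring confirms.

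The main obstacle is only bookkeeping: confirming every non-relative face is covered exactly once. I would do this by counting faces per piece against the intervals above.
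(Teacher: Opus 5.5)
Your constructions for types (b) and (c) are correct, and they are exactly the ``inspection'' the paper has in mind. Your square assignments reproduce Figure~\ref{fig:V0partitionings}, and the chaining argument for $V_x$ is right. Your ``both''/``neither'' states of $\Tau$, assembled around $pqr$ or around the spine $st$, give the clause states in the paper's $C^+$ figure. (Minor slip: the excluded choice is $23r\mapsto r$; the edges $r2$ and $2r$ are the same.)

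\textbf{The gap is type (a).} You never exhibit it, and the concrete route you suggest for $V^0$ fails as stated.
\begin{itemize}
\item A type (a) partitioning with gluing of the tetrahedral boundary on $\{2,3,a,b\}$ at $e=23$ need not use $e$. For example, $23a\mapsto a$, $23b\mapsto b$, $2ab\mapsto ab$, $3ab\mapsto 3ab$ uses no gluing face. Combined with your square in state (b), it leaves the edge $23$ covered by nobody.
\item A count rules out everything else except one alternative. The three intervals other than the fixed one have sizes in $\{1,2,4\}$ and must cover the remaining $10$ mandatory faces plus the used gluing faces. So the only other option uses $e$ together with exactly one endpoint, e.g.\ $23a\mapsto 2$, $23b\mapsto b$, $3ab\mapsto a$, $2ab\mapsto 2ab$.
\item Pair this with the square in state (b) and the other tetrahedral boundary as in Lemma~\ref{lem:PwGbdryS2}(b). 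This gives a type (a) partitioning of $V^0$ using $1,2,3$. Putting all other copies in state (b) then gives type (a) for $V_x$.
\end{itemize}

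For the clause gadgets, let $u$ be the number of used hollow vertices. The covered faces have Euler characteristic $u-2$ in $(C^+,\bdry C^+)$ and $u-1$ in $(C^{\neg},\bdry C^{\neg})$, and this equals the number of fixed facets. So a type (a) partitioning of $C^+$ must use all three hollow vertices, and you should say which fixed facets you use.
\begin{itemize}
\item For $C^+$, take $pqr\mapsto pqr$ with all three copies of $\Tau$ in your ``both'' state.
\item For $C^{\neg}$, take e.g.\ $sta_1\mapsto st$, $sta_2\mapsto sta_2$, $sta_3\mapsto sa_3$, with the first two copies of $\Tau$ ``both'' and the third ``neither''.
\end{itemize}
Your idea of fixing a facet inside $\Tau$ also works, but only in one way. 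The same count forces that copy to use $\ell$ and not $23$, e.g.\ $12\ell\mapsto\ell$, $13\ell\mapsto 3\ell$, $23\ell\mapsto 23\ell$. The central triangle or page must then take that edge, with the other copies chosen accordingly.
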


\begin{proof}
Inspection.
\end{proof}

\subsection{\label{subsec:AssembledProof}Proof of Theorem~\ref{thm1-IN-3ReduceRel}}

We are now ready to prove Theorem~\ref{thm1-IN-3ReduceRel}. 

First, suppose that $\varphi$ is a partitioning of $(\Delta,\Sigma)$.
Then $\varphi$ induces a partitioning with gluing of each variable
and clause gadget:
\begin{itemize}
\item In each variable gadget $(V_{x},R_{x})$, we have an induced partitioning
with gluing at the subcomplex generated by the variable vertices.
\item In each positive clause gadget $(C^{+},\bdry C^{+})$ and negated
clause gadget $(C^{\neg},\bdry C^{\neg})$, we have an induced partitioning
with gluing at the subcomplex generated by the three hollow vertices.
\end{itemize}
Since the Euler characteristic $\chi(\Delta,\Sigma)$ is zero, Lemma~\ref{lem:PartEulerChar}
tells us that no facet $\sigma$ has $\varphi(\sigma)=\sigma$. It
follows from Corollary~\ref{cor:PwG-Vx} that each $(V_{x},R_{x})$
uses either positive or negated variable vertices. Setting $x$ true
if $(V_{x},R_{x})$ uses the positive variable vertices (and otherwise
false) yields a truth assignment $\mathcal{T}$.

The remaining variable vertices must be used by the clause gadgets
(both positive and negated) corresponding to clauses containing $x$
or $\neg x$ as a literal. By Lemmas~\ref{lem:PwGposclause} and
\ref{lem:PwGnegclause}, each $(C^{+},\bdry C^{+})$ must use exactly
two of these variable vertices left over from $\mathcal{T}$ and each
$(C^{\neg},\bdry C^{\neg})$ must use exactly one. Since $C^{+}$
attaches to the literals of the associated clause and $C^{\neg}$
to the literals of the negated clause, we see that $\mathcal{T}$
is a legal \prob{1-In-3-Sat} truth assignment.

Conversely, if $\mathcal{T}$ is a \prob{1-In-3-Sat} truth assignment,
then we put together partitionings with gluing of each gadget using
Proposition~\ref{prop:PartitioningFromGluing}. Since $\chi(\Delta,\Sigma)=0$,
we will not want any facets with $\varphi(\sigma)=\sigma$ by Lemma~\ref{lem:PartEulerChar}.

We take a partitioning with gluing of each $(V_{x},R_{x})$ that uses
$x$ or $\neg x$ according to whether $x$ is true or false in $\mathcal{T}$.
Since $\mathcal{T}$ has exactly one true literal in each clause,
we can use the remaining variable vertices in $(C^{+},\bdry C^{+})$
and $(C^{\neg},\bdry C^{\neg})$ gadgets.

Corollary~\ref{cor:1-IN-3Reduce} is an immediate consequence of
Theorem~\ref{thm1-IN-3ReduceRel} and Proposition~\ref{prop:RelToAbs}.

\begin{figure}
% A state of the positive clause: l_1 true, l_2, l_3 false.
\begin{tikzpicture}[scale=1.45]
  \clausecoords\clausebase
  % Cl_1 takes the slot pr fed by the true literal
  \ecov{r}{p}{q}
  % converter 1 (true literal): empty state, rotation of spokes
  \ecov{u1}{p}{r}\ecov{u1}{w1}{p}\ecov{u1}{r}{w1}
  % converters 2,3 (false literals): take u and their slot
  \vcov{u2}{w2}{p}\ecov{u2}{q}{w2}\ecov{p}{q}{u2}
  \vcov{u3}{w3}{q}\ecov{u3}{r}{w3}\ecov{q}{r}{u3}
  \node[voutc,label={[lab,label distance=0.5pt]60:$\ell_1$}] at (u1) {};
  \node[vused,label={[lab,label distance=1pt]15:$\ell_2$}] at (u2) {};
  \node[vused,label={[lab,label distance=1pt]165:$\ell_3$}] at (u3) {};
\end{tikzpicture}

\caption{The partitioning with gluing of $C^{+}$ associated with truth assignment
$\ell_{1}=\text{true}$, $\ell_{2}=\ell_{3}=\text{false}$. The arrows
are drawn from $\varphi(\sigma)$ to $\sigma$; shaded bands indicate
the edges in $[\varphi(\sigma),\sigma]$. Vertices $\ell_{2}$ and
$\ell_{3}$ are used by this gadget.}
\end{figure}

\begin{rem}
\label{rem:DupedVars}The reduction assumes that a \prob{1-In-3-Sat}
instance has every variable occurring in at least two literals of
clauses. It is possible to avoid this assumption, using a single vertex
relative to the empty set as a $V(1)$ gadget. As this approach would
require more care with the partition gluing arguments, we do not follow
it further.
\end{rem}

\section{\label{sec:Remarks}Additional remarks and questions}

Partitionability has often been studied in the context of Cohen-Macaulay
simplicial complexes \cite{Stanley:1996}. Indeed, it was a long-standing
conjecture disproved by Duval, Goeckner, Klivans and Martin \cite{Duval/Goeckner/Klivans/Martin:2016}
that every Cohen-Macaulay simplicial complex is partitionable. I do
not know if partitionability is NP-complete when restricted to Cohen-Macaulay
complexes.
\begin{question}
\label{que:CMPartNPcomplete}Is \Partitionability{} NP-complete when
restricted to Cohen-Macaulay complexes? 
\end{question}

Examples of Cohen-Macaulay simplicial complexes that are not partitionable
seem to be relatively difficult to construct. In \cite{Duval/Goeckner/Klivans/Martin:2016},
the authors construct a $3$-dimensional complex with 42 tetrahedra.
No examples seem to be known of $2$-dimensional complexes that are
Cohen-Macaulay but not partitionable, and for example \cite{Santamaria-Galvis:2022}
shows that every triangulation of the dunce cap or of $\rr\mathrm{P}^{2}$
is partitionable.
\begin{question}
\label{que:CMPartDim2}Is every $2$-dimensional Cohen-Macaulay complex
also partitionable?
\end{question}

Of course, a ``yes'' answer to Question~\ref{que:CMPartDim2} would
mean that Question~\ref{que:CMPartNPcomplete} is only interesting
in dimensions $3$ and higher.

The full conjecture of Kleinschmidt and Onn \cite{Kleinschmidt/Onn:1995},
that \Partitionability{} is NP-complete already for $2$-dimensional
pseudomanifolds, also remains open. In a $2$-dimensional pseudomanifold,
every edge lies in at most two triangles. Our construction here has
edges that lie in exactly three triangles in two places: the edges
where we attach tetrahedral boundaries, and an edge in the $C^{\neg}$
gadget.

\bibliographystyle{20_Users_russw_Documents_Research_mypapers_Partitionability_is_NP-complete_hamsplain}
\bibliography{19_Users_russw_Documents_Research_mypapers_Partitionability_is_NP-complete_Master}

\end{document}